\documentclass[12pt]{article}

\usepackage{hyperref}

\usepackage{graphicx}      
\usepackage{amsmath,amssymb,amsfonts}
\usepackage{enumitem}

\newtheorem{theorem}{Theorem}

\newtheorem{lemma}[theorem]{Lemma}
\newtheorem{definition}{Definition}
\newtheorem{remark}{Remark}

\usepackage{amsmath,amssymb}

\usepackage{graphics} 
  \usepackage{epsfig}
\usepackage{lipsum}
\usepackage{amsfonts}
\usepackage{amssymb}
\usepackage{graphicx}
\usepackage{epstopdf}
\usepackage{graphicx}
\usepackage{enumitem}
\usepackage{tikz}
\usepackage{booktabs}
\usepackage{float}
\usetikzlibrary{positioning,arrows.meta,shadows,fit}

\usepackage{subfigure}
\ifpdf
  \DeclareGraphicsExtensions{.eps,.pdf,.png,.jpg}
\else
  \DeclareGraphicsExtensions{.eps}
\fi

\usepackage{enumitem}

\definecolor{customcolor}{RGB}{32,178,170}
\colorlet{coverlinecolor}{customcolor}

\usepackage{tikz}
\usetikzlibrary{positioning, arrows.meta, shapes, fit, shapes.misc, calc}

\DeclareMathOperator{\diag}{diag}
\DeclareMathOperator{\bfa}{\mathbf{a}}
\DeclareMathOperator{\PP}{\mathbf{P}}
\DeclareMathOperator{\barP}{\bar{\mathbf{P}}}
\usepackage{amsmath,amssymb,mathtools}
\usepackage{mathrsfs}
\usepackage{algorithm}
\usepackage[noend]{algpseudocode}

\begin{document}

\title{Distributed recursive binary identification under tampering and non-persistent excitation}

\author{
Jian Guo\footnote{Department of Applied Mathematics, 
The Hong Kong Polytechnic University, Kowloon, Hong Kong 
(\texttt{jiguo@polyu.edu.hk}).}
\ and\
Ji-Feng Zhang\footnote{School of Automation and Electrical Engineering, 
Zhongyuan University of Technology, Zhengzhou 450007, China, 
and the State Key Laboratory of Mathematical Sciences, 
Academy of Mathematics and Systems Science, 
Chinese Academy of Sciences, Beijing 100190, China 
(\texttt{jif@iss.ac.cn}).}
}

\maketitle

\begin{abstract} \noindent
In this paper, we consider distributed parameter estimation with binary observations under measurement-side tampering: each node observes a thresholded output whose label may be flipped and exchanges information over a communication graph. We develop a distributed recursive projection algorithm based on the diffusion strategy. Without imposing independence, stationarity, or Gaussian assumptions, we establish almost sure upper bounds of both the accumulated regrets of the adaptive predictor and the distributed estimation error.
Under a mild cooperative excitation condition, all nodes' estimate are consistent, even when each node is individually non-exciting. Simulations on a jointly exciting network corroborate the theory and show that the proposed algorithm converges, whereas non-cooperative and tampering-unaware baselines do not.
\ \\\\
\noindent {\em Key words\/}: Binary identification, Byzantine tampering, cooperative excitation, diffusion
strategy, distributed estimation, martingale
theory.
\end{abstract}

\section{Introduction}

Distributed estimation over networks has attracted sustained attention in recent years.
In many applications, a collection of spatially distributed agents aims to estimate a common
parameter vector by exchanging information over a communication graph.
To reduce computational and communication burdens and to enhance robustness,
incremental, consensus, and diffusion strategies have been proposed and extensively analyzed; see, e.g., \cite{cattivelli2009diffusion,chen2012diffusion,cattivelli2010diffusion,alghunaim2024local,zhang2026dive}.
Among these schemes, diffusion-type algorithms are particularly appealing due to their simple
recursions and favorable stability properties.
Classical results typically assume accurate local measurements and reliable communication links.
Under these assumptions, a broad class of distributed least-squares and stochastic approximation
methods has been shown to achieve asymptotically convergence, even under non-persistent excitation; see \cite{xie2018analysis,xie2020convergence,gan2023distributed} and references therein.

Two practical issues challenge these idealized settings.
The first is \emph{measurement and communication constraints}.
In many sensor network applications, sensing units are low-cost and battery-powered, with strict limits
on sampling precision, communication bandwidth, and storage.
Consequently, it is often infeasible to transmit high-resolution analog or multi-bit measurements.
This has motivated the study of distributed estimation with quantization.
Existing works can be roughly divided into two classes.
The first class considers accurate local measurements but quantized inter-agent communication
for exchanging intermediate estimates, e.g., within consensus or diffusion frameworks 
\cite{kar2012distributed,michelusi2022finite,zhu2018mitigating,iakovidou2022s}.
The second class considers quantized observations at the sensing stage while allowing accurate
exchange of processed information among agents; see, for instance,
distributed algorithms based on normalized least mean square or related quasi-Newton recursions for quantized data \cite{wang2021distributed,lu2025consensus,wang2026distributed}.
These studies demonstrate that, with suitable algorithm design, it is possible to approach
centralized performance despite coarse quantization.
However, most existing results rely on multi-bit uniform quantizers and do not address
secure operation under adversarial disturbances.

The second issue is \emph{security}.
As distributed estimators rely on repeated local updates and information exchange,
they are inherently vulnerable to malicious attacks.
Typical threats include falsified sensor readings \cite{pasqualetti2013attack,an2019distributed}, manipulated intermediate estimates \cite{su2019finite,an2021byzantine}, corrupted communication links that inject misleading data into the network \cite{de2015input}.
Such behaviors are naturally modeled in the framework of Byzantine adversaries.
To mitigate these risks, a number of secure or robust distributed estimation and
diffusion strategies have been proposed, including schemes with attack detection \cite{chen2018resilient,he2021secure},
innovation saturation \cite{fang2018robustifying,fang2021robust}, robust combination rules \cite{yu2022robust,han2025masked}, or game-theoretic defenses \cite{wu2020zero,paarporn2024strategically}. Related efforts also study privacy-preserving state estimation \cite{guo2025state} and Dos attack \cite{su2025event}.
These works provide important tools for resilient distributed estimation.
Nonetheless, most of them assume real-valued measurements or multi-bit messages and do not
explicitly address the joint effect of severe quantization and Byzantine-type attacks,
especially in the extreme case of binary observations.

In practice, binary sensors are widely deployed due to their low cost, simple hardware,
and low power consumption.
Examples include threshold-type detectors in structural health monitoring \cite{farrar2012structural}, industrial
alarm systems \cite{farrar2007introduction}, intrusion detection \cite{khraisat2019survey}, and environmental surveillance \cite{banerjee2008fault}.
In such systems, each node reports only a one-bit observation, e.g., ``above/below threshold''.
At the same time, the binary structure makes these systems particularly exposed:
bit-flip attacks or corrupted decision bits are easy to implement and difficult to detect.
Moreover, in many applications, a single sensor may not provide sufficient excitation
for consistent identification, so reliable estimation relies essentially on cooperation.
These observations raise a natural but unexplored question:
\emph{how to design secure distributed estimation algorithms under binary observations
subject to bit-flip, a Byzantine-type attacks, possibly in non-PE and non-i.i.d. observations?}

This work addresses this question within a diffusion strategy.
We consider a network of agents observing a common linear model through binary sensors.
Each agent's one-bit output is subject to random flips before being used in the local update.
Agents perform local stochastic recursions based on these corrupted binary data and then
combine their intermediate estimates with neighbors via a diffusion strategy.
The goal is to recover the true parameter vector cooperatively, even when individual agents
lack sufficient excitation and when the binary observations of the agent are under Byzantine-type attacks.

The contributions of this paper are twofold.
\begin{enumerate}
  \item We propose a distributed recursive projection algorithm tailored to
  binary-valued observations with measurement-side tampering. The adaptation step performs a projected update that accounts for bit flips and the combination step mixes estimates and information matrices with neighbors. Taken together, This yields a distributed
  diffusion recursion that simultaneously captures binary-valued observation, random flips, and
 cooperation in the network.

  \item We develop a convergence theory for the proposed distributed algorithm.
  Under general stochastic conditions, without imposing independence or stationarity on the system signals, we derive almost-sure upper bounds of both the accumulated regrets of the adaptive predictor and the distributed estimation error of all nodewise estimates despite binary sensing and measurement-side
  tampering. A cooperative excitation condition is proposed for the tampered binary
  observations and shown to be an extension of the single-node:
  it allows the network to identify the true parameter even when no individual agent
  can do so on its own.
\end{enumerate}

The paper is organized as follows. Section 2 introduces the model and presents the distributed identification algorithm. Section 3 establishes the convergence theory of the proposed algorithm. Section 4 offers numerical simulations. Section 5 concludes the paper and outlines future directions.

\subsection{Preliminaries}

{\bf Notation.}
Let $\mathbb{S}^n$ be the set of $n\times n$ real symmetric matrices and $\mathbb{S}_{++}^n$ be the cone of positive definite matrices. For $A,B\in\mathbb{S}^n$, $A\succeq B$ means that $A-B$ is positive semidefinite. For $A\in\mathbb{R}^{m\times n}$ and $B\in\mathbb{R}^{p\times q}$, the Kronecker product is $A\otimes B\in\mathbb{R}^{mp\times nq}$. For a square matrix $M\in\mathbb{R}^{n\times n}$, the determinant is $|M|=\det(M)$. The indicator function is $I\{\text{statement}\}=1$ if the statement is true and $I\{\text{statement}\}=0$ otherwise. The Euclidean norm of a vector $x\in\mathbb{R}^n$ is $\|x\|=\sqrt{x^\top x}$. The Euclidean (spectral) norm of a matrix $A\in\mathbb{R}^{m\times n}$ is $\|A\|=\sqrt{\lambda_{\max}(A^\top A)}$, where $\lambda_{\max}(\cdot)$ denotes the largest eigenvalue of a symmetric matrix. Correspondingly, $\lambda_{\min}(\cdot)$ denotes the smallest eigenvalue of a symmetric matrix. Let $[n]=\{1,2,\dots,n\}$.

For vectors $\{x_i\}$, define 
$
\operatorname{col}\{x_1,\dots,x_n\}:=\begin{bmatrix}x_1^\top & \cdots & x_n^\top\end{bmatrix}^\top .
$
For a vector $x=\operatorname{col}\{x_1,\dots,x_n\}$, write 
$\operatorname{col}_i\{g(x_i)\}:=\operatorname{col}\{g(x_1),\dots,g(x_n)\}$ and  
$\operatorname{diag}\{\cdot\}$ forms a block-diagonal matrix.  For a block-diagonal matrix $H\!=\!\operatorname{diag}\{H_1,\dots,H_n\}$, set
$\boldsymbol{\Pi}_{H}\{\zeta\}:=\operatorname{col}_i\big(\Pi_{H_i}\{\zeta_i\}\big)$ for 
$\zeta=\operatorname{col}\{\zeta_1,\dots,\zeta_n\}$. Finally, $\operatorname{vec}\!\big(\operatorname{diag}(H_1,\ldots,H_n)\big):=
\begin{bmatrix}
H_1\\[-1pt]\vdots\\[-1pt] H_n
\end{bmatrix}.$ 

{\bf Communication graph.}
The communication network is modeled by an undirected weighted graph $G=(V,E,A)$, where $V=[n]$ is the set of nodes, $E\subseteq V\times V$ is the undirected edge set, and $A=[a_{ij}]\in\mathbb{R}^{n\times n}$ is the weighted adjacency matrix. We assume $a_{ij}\ge 0$, $a_{ij}=a_{ji}$ for all $i,j$, $\sum_{j=1}^n a_{ij}=1$ for all $i$, and $a_{ij}>0$ only if $(i,j)\in E$ (otherwise $a_{ij}=0$). Under these assumptions $A$ is symmetric and row-stochastic, hence $A$ is doubly stochastic. The neighbor set of node $i$ is $N_i=\{\,j\in V:\ (i,j)\in E\,\}=\{\,j\in V:\ a_{ij}>0\,\}$.
A path of length $\ell\in\mathbb{N}$ is a sequence $(i_0,i_1,\ldots,i_\ell)$ with $(i_{k-1},i_k)\in E$ for all $k=1,\ldots,\ell$. The graph distance $\mathrm{dist}_G(i,j)$ is the length of a shortest path in $G$ connecting nodes $i$ and $j$ (with $\mathrm{dist}_G(i,i)=0$ and $\mathrm{dist}_G(i,j)=\infty$ if $i$ and $j$ lie in different components). The graph $G$ is connected if $\mathrm{dist}_G(i,j)<\infty$ for all $i,j\in V$. The diameter of $G$ is $D(G)=\max_{i,j\in V}\mathrm{dist}_G(i,j)$.

\section{Model Formulation}

\subsection{Model Description Under Byzantine Attack}

We consider a network with nodes $V=[n]$ and communication graph $G=(V,E,A)$. 
At each time $k\ge 0$, node $i\in V$ forms an {observable} regressor $\varphi_{k,i}\in\mathbb{R}^p$ that may collect the current input and a finite number of past inputs, and the plant output is
\begin{equation}\label{eq:plant}
y_{k+1,i}=\varphi_{k,i}^\top\theta+w_{k+1,i},\qquad i\in[n],
\end{equation}
where $\theta\in\mathbb{R}^p$ is an unknown time-invariant parameter and $\{w_{k+1,i}\}$ is a noise sequence.
Denote by $\{\mathcal{F}_k\}_{k\ge 0}$ the natural filtration in this network setting:
$$
\begin{aligned}
\mathcal{F}_k
=&\sigma\!\Big(\,\{\,y_{j,i},\ u_{j,i},\ w_{j,i},\ w'_{j,i}\,:\ i\in[n],\ 1\le j\le k\,\}\\
&\cup\{\,u_{0,i}: i\in[n]\,\}\Big),\quad k\ge 0,
\end{aligned}
$$
where $\{u_{j,i}\}$ and $\{w'_{j,i}\}$ denote, respectively, the system inputs and a possible exogenous input sequence at node $i$.

\noindent\textbf{Byzantine attacks in the exact-output setting.}
In secure distributed estimation, adversarial actions are often grouped into {measurement attacks} and {communication-link attacks}. In this work we focus on {measurement attacks}. If $y_{k+1,i}$ were directly observable, a measurement-side Byzantine adversary alters the sensor reading at the sensing device. Typical models include
\begin{align}
&\text{(i) arbitrary replacement:}\ y_{k+1,i}\ \mapsto\ \tilde y_{k+1,i},\nonumber\\
&\text{(ii) multiplicative scaling:}\ y_{k+1,i}\ \mapsto\ \alpha_{k,i}\,y_{k+1,i},\ \alpha_{k,i}>0,\nonumber
\end{align}
see, e.g., \cite{yin2018byzantine,an2021byzantine}.

\noindent\textbf{Binary sensing.}
In our setting, the real-valued output is not directly available. Each node $i$ has a fixed threshold $C\in\mathbb{R}$ and produces the binary signal
\begin{equation}\label{eq:binary}
s^{0}_{k+1,i}=I\{\,y_{k+1,i}\le C\,\}.
\end{equation}

\noindent\textbf{Byzantine attacks in the binary setting.}
We assume the adversary tampers the measurement-side binary output produced by the local comparator (e.g., biasing the comparator or spoofing its digital output). The bit entering the estimator at node $i$ is modeled via node-dependent flipping probabilities
\begin{equation}\label{eq:flip}
\left\{
\begin{aligned}
\Pr\{\,s_{k,i}=0\mid s^{0}_{k,i}=1\,\}&=p_i,\\
\Pr\{\,s_{k,i}=1\mid s^{0}_{k,i}=0\,\}&=q_i,
\end{aligned}
\right.
\quad p_i,q_i\in[0,1),\  i\in[n].
\end{equation}
Thus $(p_i,q_i)$ quantify node-side misclassification induced by Byzantine tampering. The information flow for distributed binary identification under Byzantine tampering is shown in Fig.~\ref{fig:attack}.

\medskip
\noindent\textbf{Identification objective.}
Given the observable regressors $\{\varphi_{k,i}\}$ and the possibly tampered binary observations $\{s_{k+1,i}\}$ generated by \eqref{eq:plant}-\eqref{eq:flip}, our aim is to design a \emph{distributed} recursive algorithm to jointly estimate $\theta$.

\begin{figure}[t]
\centering
\resizebox{1\linewidth}{!}{
\begin{tikzpicture}[
    node distance=0.5cm and 0.9cm,
    >={Triangle[length=4pt,width=4pt]},
    every node/.style={font=\small},
    block/.style={draw, rectangle, rounded corners=2pt, fill=blue!5, minimum height=8mm, minimum width=20mm, align=center},
    cloud/.style={shape=cloud, draw=red!60!black, fill=red!10, cloud puffs=11, cloud ignores aspect,
                  minimum width=18mm, minimum height=7mm, align=center},
    net/.style={draw, rectangle, rounded corners=2pt, fill=green!5, minimum height=16mm, minimum width=28mm, align=center},
    point/.style={circle, fill, inner sep=0pt, minimum size=2pt}
]


\def\xB{4.14}   
\def\xC{8.28}   
\def\xD{12.42}  
\def\xE{15.41}  
\def\xF{20}  

\def\yTop{2.5}
\def\yMid{0}
\def\yBot{-2.5}

\node (network) at (\xF,0) [net] {Communication /\\ Fusion center};

\node (plant1) [block] at (\xB,\yTop) {System Plant\\(Node 1)};
\node (sens1)  [block, right=1.3cm of plant1] {Binary\\Sensor};
\node (atk1)   [cloud, right=1.3cm of sens1] {Byzantine\\tampering};
\node (est1)   [block, right=1.3cm of atk1] {Local Estimator \\$\psi_{k+1,1}$};

\coordinate (u1) at ($(plant1.west)+(-8mm,0)$);
\draw[->, thick] (u1) -- node[midway, above] {$u_{k,1}$} (plant1.west);

\draw[->, thick] (plant1) -- node[midway, above] {$y_{k+1,1}$} (sens1);
\draw[->, thick] (sens1) -- node[midway, above] {$s^{0}_{k+1,1}$} (atk1);
\draw[->, thick] (atk1)  -- node[midway, above] {$s_{k+1,1}$} (est1);

\node (plant2) [block] at (\xB,\yMid) {System Plant\\(Node 2)};
\node (sens2)  [block, right=1.3cm of plant2] {Binary\\Sensor};
\node (est2)   [block, right=3cm of sens2] {Local Estimator \\$\psi_{k+1,2}$};

\coordinate (u2) at ($(plant2.west)+(-8mm,0)$);
\draw[->, thick] (u2) -- node[midway, above] {$u_{k,2}$} (plant2.west);

\draw[->, thick] (plant2) -- node[midway, above] {$y_{k+1,2}$} (sens2);
\draw[->, thick] (sens2) -- node[midway, above] {$s_{k+1,2}=s^{0}_{k+1,2}$} (est2);

\node at (\xB, \yBot+1.4) {$\vdots$};
\node at (\xC-0.4, \yBot+1.4) {$\vdots$};
\node at (\xE, \yBot+1.4) {$\vdots$};

\node (plantn) [block] at (\xB,\yBot) {System Plant\\(Node $n$)};
\node (sensn)  [block, right=1.3cm of plantn] {Binary\\Sensor};
\node (atkn)   [cloud, right=1.3cm of sensn] {Byzantine\\tampering};
\node (estn)   [block, right=1.3cm of atkn] {Local Estimator\\$\psi_{k+1,n}$};

\coordinate (un) at ($(plantn.west)+(-8mm,0)$);
\draw[->, thick] (un) -- node[midway, above] {$u_{k,n}$} (plantn.west);

\draw[->, thick] (plantn) -- node[midway, above] {$y_{k+1,n}$} (sensn);
\draw[->, thick] (sensn) -- node[midway, above] {$s^{0}_{k+1,n}$} (atkn);
\draw[->, thick] (atkn)  -- node[midway, above] {$s_{k+1,n}$} (estn);

\coordinate (fL1) at ($(network.south west)!0.4!(network.north west)$);
\coordinate (fL2) at ($(network.south west)!0.5!(network.north west)$);
\coordinate (fL3) at ($(network.south west)!0.6!(network.north west)$);

\draw[->, thick] (est1.east) -- (fL3); 
\draw[->, thick] (est2.east) -- (fL2); 
\draw[->, thick] (estn.east) -- (fL1); 

\node (ret1)   [point, right=0.3cm of network.east] {};
\node (theta1) [block, right=0.7cm of ret1, yshift=1.7cm] {$\theta_{k+1,1}$};
\node (theta2) [block, right=0.7cm of ret1, yshift=0.0cm] {$\theta_{k+1,2}$};
\node (thetan) [block, right=0.7cm of ret1, yshift=-1.7cm] {$\theta_{k+1,n}$};
\node at ($(theta2.south)!0.4!(thetan.north)$) {$\vdots$};

\draw[-, thick] (network.east) -- (ret1);
\draw[->, thick] (ret1) |- (theta1);
\draw[->, thick] (ret1) -- (theta2);
\draw[->, thick] (ret1) |- (thetan);

\end{tikzpicture}
}
\caption{Information flow for distributed binary identification under Byzantine attacks.}
\label{fig:attack}
\end{figure}

\subsection{Identifiability Analysis}

Let $F_{k,i}(\cdot)$ denote the conditional distribution function of the noise $w_{k+1,i}$ given the filtration $\mathcal{F}_k$  at node $i$.
Similar to \cite{guo2025recursivebinaryidentificationdata}, under the binary sensing (\ref{eq:binary}) and the tampering model (\ref{eq:flip}), the conditional probabilities of the measurement at node $i$ are
\begin{align}
&\Pr\{s_{k+1,i}=0\mid \mathcal{F}_k\}\\
=&\Pr\{s^0_{k+1,i}=1\mid \mathcal{F}_k\}\Pr\{s_{k+1,i}=0\mid s^0_{k+1,i}=1,\mathcal{F}_k\}\notag\\
&+\Pr\{s^0_{k+1,i}=0\mid \mathcal{F}_k\}\Pr\{s_{k+1,i}=0\mid s^0_{k+1,i}=0,\mathcal{F}_k\}\notag\\
=&p_i\,F_{k,i}\big(C-\theta^\top \varphi_{k,i}\big) + (1-q_i)\Big[1-F_{k,i}\big(C-\theta^\top \varphi_{k,i}\big)\Big]\notag\\
=&(p_i+q_i-1)\,F_{k,i}\!\big(C-\theta^\top \varphi_{k,i}\big)+1-q_i,\label{eq:multi_dist0}
\end{align}
and
\begin{align}
&\Pr\{s_{k+1,i}=1\mid \mathcal{F}_k\}
=1-\Pr\{s_{k+1,i}=0\mid \mathcal{F}_k\}\notag\\
=&\big(1-(p_i+q_i)\big)\,F_{k,i}\big(C-\theta^\top \varphi_{k,i}\big)+q_i.\label{eq:multi_dist1}
\end{align}

\medskip
\noindent\textbf{Identifiability.}
From \eqref{eq:multi_dist0}-\eqref{eq:multi_dist1}, if $p_i+q_i=1$, then the distribution of $s_{k+1,i}$ is independent of $\theta$. Thus the binary observations at node $i$ carry no information about $\theta$.
Thus, for each node $i$, a necessary condition for the local
identifiability of $\theta$ from $\{\varphi_{k,i}, s_{k+1,i}\}$ is
\begin{equation}\label{eq:ident_cond_pq}
p_i+q_i\neq 1.
\end{equation}
In particular, a node with $p_i+q_i=1$ carries no information about $\theta$
and can be discarded in the identification.

Beyond above, recovering $\theta$ from $\{\varphi_{k,i},s_{k+1,i}\}$ requires sufficient excitation of the regressors (cf.\ \cite{ljung1995system}).
The concrete excitation condition will be given later.
In this paper we first analyze the case where the node-dependent attack parameters $(p_i,q_i)$ are {known}, which allows a clean convergence analysis.
Extensions to unknown and possibly time-varying $(p_{k,i},q_{k,i})$ are left for future work.

\subsection{Assumptions under Byzantine Measurement Attacks}

With the preceding model and attack setting, we now state assumptions on the parameter set, regressors, noises, binary sensing under attacks, and the graph.

\noindent\textbf{Assumption 1 (Parameter set).}
There exists a known nonempty compact convex set $\Omega\subset\mathbb{R}^p$ such that $\theta\in\Omega$.
Let $L:=\sup_{\eta\in\Omega}\|\eta\|<\infty$.

\noindent\textbf{Assumption 2 (Regressors).}
For each $i\in[n]$ and $k\ge 0$, the regressor $\varphi_{k,i}\in\mathbb{R}^p$ is $\mathcal{F}_k$-measurable.
Moreover,
\[
\sup_{k\ge 0}\ \max_{i\in[n]}\ \|\varphi_{k,i}\| \le M < \infty\quad\text{a.s.}
\]

\noindent\textbf{Assumption 3 (Noises).}
For each $k\ge 0$ and $i\in[n]$, conditional on $\mathcal{F}_k$,
the noise $w_{k+1,i}$ has a conditional
distribution function (cdf) $F_{k,i}$ and density $f_{k,i}:=F_{k,i}'$.
Moreover, conditional on $\mathcal{F}_k$, the components
$w_{k+1,1},\ldots,w_{k+1,n}$ are independent.
In addition, there exists a constant $f_{\min}>0$ such that for all
$k\ge0$ and $i\in[n]$,
\[
\inf_{k\ge0,i\in[n]}\inf_{x\in[C-LM,C+LM]} f_{k,i}(x)\ge f_{\min}.
\]

\noindent\textbf{Assumption 4 (Graph).}
The communication graph $G=(V,E,A)$ is connected.

\begin{remark}
Assumption~1 specifies a compact convex set $\Omega$ for $\theta$, which makes projections well defined (see Algorithm \ref{algo1}), gives a finite bound $L$, and fixes the working interval $[\,C-LM,C+LM]$ so the iterates remain bounded. Assumption~2 requires $\mathcal{F}_k$-measurable regressors, ensuring causality. Assumption~3 imposes conditionally independent noises across nodes with conditional laws $F_{k,i}$ whose densities are
uniformly bounded below on the relevant interval. This prevents degeneracy near the threshold, yields a positive local slope, and supports identifiability. Assumption~4 requires a connected graph $G$, which enables distributed convergence.
\end{remark}

\subsection{Distributed Recursive Projection Algorithm with\\ Tampering}

To proceed, we recall a projection operator onto $\Omega$.

\begin{definition}[Projection w.r.t.\ $Q$]\label{def:projQ}
Let $\Omega\subset\mathbb{R}^p$ be nonempty, compact, and convex, and let $Q\in\mathbb{S}_{++}^p$. Define
\[
\Pi_{Q}(\eta)\ :=\ \arg\min_{\omega\in\Omega}\ \|\eta-\omega\|_{Q},
\ \ 
\|\eta\|_{Q}:=\sqrt{\eta^\top Q\,\eta},\ \ \eta\in\mathbb{R}^p .
\]
\end{definition}

By nonexpansiveness of metric projections, it follows
\[
\|\omega-\Pi_{Q}(\eta)\|_{Q}\ \le\ \|\omega-\eta\|_{Q},
\ \forall\,\omega\in\Omega,\ \eta\in\mathbb{R}^p .
\]

We now present an ATC (adaptation-then-combination) distributed recursive projection method that incorporates the known node-wise tampering probabilities $(p_i,q_i)$. The update uses the projection $\Pi_Q$ onto $\Omega$.

\begin{algorithm}[htbp]
\caption{ATC Distributed Recursive Projection under Binary Tampering}
\label{algo1}
\begin{algorithmic}[1]
\State \textbf{Inputs:} The (weighted) adjacency matrix $A$, threshold $C$,
conditional cdfs $\{F_{k,i}\}_{k\geq 0,\,i\in[n]}$ and densities
$\{f_{k,i}\}_{k\geq 0,\,i\in[n]}$, tampering probabilities $\{(p_i,q_i)\}_{i\in[n]}$.
  \State \textbf{Initialization:} For $i\in[n]$, choose $\theta_{0,i}\in\Omega$, $P_{0,i}\in\mathbb{S}_{++}^p$, 
  and define
 $$
  \beta_{i}=\operatorname{sign}\!\big(1-(p_i+q_i)\big)\;
  \min_{j\in[n]}\!\left\{|1-(p_j+q_j)|f_{\min}\right\}.
  $$
  \For{$k=0,1,\ldots$}
   \State \textbf{Adaptation:} For $i\in[n]$,
    \begin{align}
    a_{k,i}&=\frac{1}{1+\beta_{i}^2\,\varphi_{k,i}^\top P_{k,i}\varphi_{k,i}},\label{a_ki}\\
    \tilde{s}_{k+1,i}&=\big(1-(p_i+q_i)\big)\,F_{k,i}\big(C-\theta_{k,i}^\top\varphi_{k,i}\big)\nonumber\\
    &\quad +q_i-s_{k+1,i},\label{s_ki}\\
    \bar P_{k+1,i}&=P_{k,i}-\beta_{i}^2\,a_{k,i}\,P_{k,i}\varphi_{k,i}\varphi_{k,i}^\top P_{k,i},\label{barP_ki}\\
    \psi_{k+1,i}&=\Pi_{\bar P_{k+1,i}^{-1}}\Big(\theta_{k,i}+a_{k,i}\beta_{i}P_{k,i}\varphi_{k,i}\,\tilde{s}_{k+1,i}\Big).\label{psi_ki}
    \end{align}
    \State \textbf{Combination:} For $i\in[n]$,
    \begin{align}
    P_{k+1,i}^{-1}&=\sum_{j\in\mathcal{N}_i} a_{ij}\,\bar P_{k+1,j}^{-1},\label{P_ki}\\
    \theta_{k+1,i}&=P_{k+1,i}\sum_{j\in\mathcal{N}_i} a_{ij}\,\bar P_{k+1,j}^{-1}\,\psi_{k+1,j}.\label{theta_ki}
    \end{align}
  \EndFor
\end{algorithmic}
\end{algorithm}

Define, for each $i\in[n]$ and $k\ge 0$,
\begin{align}
&\varepsilon_{k+1,i}
\!:=\! \big(1\!-\!(p_i\!+\!q_i)\big)F_{k,i}\big(C-\varphi_{k,i}^\top\theta\big)+q_i - s_{k+1,i},\label{varepsilon}\\
&\gamma_{k,i}
\!:=\! \big(1\!-\!(p_i\!+\!q_i)\big)\Big[F_{k,i}\!\big(C\!-\!\varphi_{k,i}^\top\theta_{k,i}\big)\!-\!F_{k,i}\!\big(C\!-\!\varphi_{k,i}^\top\theta\big)\!\Big]\nonumber.
\end{align}
Then, using the residual definition \eqref{s_ki}, the adaptation step \eqref{psi_ki} can be written as
\begin{equation*}
\psi_{k+1,i}
=\Pi_{\bar P_{k+1,i}^{-1}}\!\Big(\,
\theta_{k,i}+a_{k,i}\,\beta_{i}\,P_{k,i}\varphi_{k,i}\,\big(\gamma_{k,i}+\varepsilon_{k+1,i}\big)
\Big).
\end{equation*}
Moreover, under Assumption~3, for $i\in[n]$, $\{\varepsilon_{k+1,i},\mathcal{F}_k\}$ is a martingale difference sequence:
\[
\mathbb{E}\big[\varepsilon_{k+1,i}\mid\mathcal{F}_k\big]=0,\qquad
|\varepsilon_{k+1,i}|\le 1\ \ \text{a.s.},
\]
so that for all $\beta>0$,
$
\sup_{k\ge 0}\ \mathbb{E}\!\big[\,|\varepsilon_{k+1,i}|^{\beta}\,\big|\,\mathcal{F}_k\big]\ \le\ 1.
$

\section{Main Results}

For compactness, 
we introduce stacked notations (dimensions indicated on the right):
\[
\begin{array}{ll}
Y_{k}:=\operatorname{col}\{y_{k,1},\dots,y_{k,n}\}, & n\times 1,\\[2pt]
S_{k}:=\operatorname{col}\{s_{k,1},\dots,s_{k,n}\}, & n\times 1,\\[2pt]
\Xi_{k}:=\operatorname{col}\{\varepsilon_{k,1},\dots,\varepsilon_{k,n}\}, & n\times 1,\\[2pt]
\Gamma_{k}:=\operatorname{col}\{\gamma_{k,1},\dots,\gamma_{k,n}\}, & n\times 1,\\[2pt]
B:=\operatorname{diag}\{\beta_1,\dots,\beta_n\}, & n\times n,\\[2pt]
\Phi_{k}:=\operatorname{diag}\{\varphi_{k,1},\dots,\varphi_{k,n}\}, & (pn)\times n,\\[2pt]
\Theta:=\operatorname{col}\{\theta,\dots,\theta\}, & (pn)\times 1,\\[2pt]
\Theta_{k}:=\operatorname{col}\{\theta_{k,1},\dots,\theta_{k,n}\}, & (pn)\times 1,\\[2pt]
\Psi_{k}:=\operatorname{col}\{\psi_{k,1},\dots,\psi_{k,n}\}, & (pn)\times 1,\\[2pt]
\widetilde{\Theta}_{k}:=\Theta_{k}-\Theta,\
\widetilde{\Psi}_{k}:=\Psi_{k}-\Theta,\quad & (pn)\times 1,\\[2pt]
\PP_{k}:=\operatorname{diag}\{P_{k,1},\dots,P_{k,n}\},& (pn)\times(pn),\\[2pt]
\barP_{k}:=\operatorname{diag}\{\bar P_{k,1},\dots,\bar P_{k,n}\}, & (pn)\times(pn),\\[2pt]
\bfa_{k}:=\operatorname{diag}\{a_{k,1},\dots,a_{k,n}\}, & n\times n,\\[2pt]
b_{k}:=a_{k}\otimes I_{p}, & (pn)\times(pn),\\[2pt]
\mathcal{A}:=A\otimes I_{p}, & (pn)\times(pn),\\[2pt]
\Delta:=\operatorname{diag}\{1\!-\!(p_1\!+\!q_1),\dots,1\!-\!(p_n\!+\!q_n)\},& n\times n,\\[2pt]
\mathbf{q}:=\operatorname{col}\{q_1,\dots,q_n\}, &{n}\times 1.
\end{array}
\]

From $y_{k+1,i}=\varphi_{k,i}^\top\theta+w_{k+1,i}$ for $i\in[n]$ and measurement-side flipping (\ref{eq:binary})-(\ref{eq:flip}), we have
\[
\mathbb{E}\!\left[S_{k+1}\mid \mathcal{F}_{k}\right]
=\Delta\,F_{k,i}\!\big(C\mathbf{1}-\Phi_k^\top\Theta\big)+\mathbf{q},
\]
where $F_{k,i}$ acts elementwise on vector arguments and $\mathbf{1}\in\mathbb{R}^{n}$ denotes the $n$-dimensional all-ones vector. Consequently, under Assumption~3, $\mathbb{E}[\Xi_{k+1}\mid\mathcal{F}_k]=0$ componentwise and $\|\Xi_{k+1}\|_\infty\le 1$ a.s. and thus, for any $\beta>0$,
$
\sup_{k\ge 0}\ \mathbb{E}\!\left[\|\Xi_{k+1}\|_\infty^{\beta}\,\big|\,\mathcal{F}_k\right]\le 1 .
$ In addition, denote $\widetilde\theta_{k+1,i}=\theta_{k+1,i}-\theta$ for $i\in[n]$ and $k\geq 0$ and let
$\bar{\beta}=|\beta_1|$.

\noindent\textbf{Block form of Algorithm \ref{algo1}.} From the notations above, Algorithm~\ref{algo1} can be written as
\begin{equation}\label{eq:block_ATC}
\left\{
\begin{aligned}
&\bar \PP_{k+1}
=\PP_{k}-(B\otimes I_{p})\, b_{k}\,\PP_{k}\,\Phi_{k}\Phi_{k}^\top \PP_{k}\,(B\otimes I_{p}),\\[2pt]
&\Psi_{k+1}
\!=\!\boldsymbol{\Pi}_{\bar \PP_{k+1}^{-1}}
\Big(
\Theta_{k}
\!+ \!\big((B\, a_k)\otimes I_{p}\big)\PP_{k}\,\Phi_{k}\big(\Gamma_{k}+\Xi_{k+1}\big)
\Big),\\[2pt]
&\operatorname{vec}\!\big(\PP_{k+1}^{-1}\big)
=\mathcal{A}\operatorname{vec}\!\big(\bar \PP_{k+1}^{-1}\big),\\[2pt]
&\Theta_{k+1}
=\PP_{k+1}\mathcal{A}\bar \PP_{k+1}^{-1}\ \Psi_{k+1}.
\end{aligned}
\right.
\end{equation}
Here $\boldsymbol{\Pi}_{\bar \PP_{k+1}^{-1}}$ means for any
$z=\operatorname{col}\{z_1,\dots,z_n\}$ with $z_i\in\mathbb{R}^p$,
\[
\boldsymbol{\Pi}_{\bar \PP_{k+1}^{-1}}(z)
:= \operatorname{col}\big\{\Pi_{\bar P_{k+1,1}^{-1}}(z_1),\dots,
\Pi_{\bar P_{k+1,n}^{-1}}(z_n)\big\}.
\]

\noindent\textbf{Error recursion.}
Since $\PP_{k+1}\mathcal{A}\bar \PP_{k+1}^{-1}(\mathbf{1}\otimes I_{p})=\mathbf{1}\otimes I_{p}$,
\begin{align}
\widetilde{\Theta}_{k+1}
&=\Theta_{k+1}-\Theta
=\PP_{k+1}\ \mathcal{A}\bar\PP_{k+1}^{-1}\ \big(\Psi_{k+1}-\Theta\big)\nonumber\\
&=\PP_{k+1}\ \mathcal{A}\bar\PP_{k+1}^{-1}\ \widetilde{\Psi}_{k+1}.
\label{eq:block_err_rec}
\end{align}

\noindent We next present a growth result, which shows that, despite binary sensing and measurement-side tampering, the Lyapunov energy $\widetilde{\Theta}_{t+1}^{\top}\PP_{t+1}^{-1}\widetilde{\Theta}_{t+1}$ and the cumulative predictive error $\sum_{k=0}^{t}\widetilde{\Theta}_{k}^{\top}\Phi_{k}\Phi_{k}^{\top}\widetilde{\Theta}_{k}$ admit logarithmic upper bounds under Assumptions~1-4.

\begin{theorem}[Growth bounds]\label{thm:growth}
Consider the system plant \eqref{eq:plant}-\eqref{eq:binary} with measurement-side binary tampering \eqref{eq:flip} and Algorithm~\ref{algo1}. 
Under Assumptions~1-4 and $p_i+q_i\neq 1$ for $i\in[n]$, as $t\to\infty$, it follows
\begin{align}
&\sum_{k=0}^{t}\ \widetilde{\Theta}_{k}^{\top}\,\Phi_{k}\Phi_{k}^{\top}\,\widetilde{\Theta}_{k}
= O\!\left(\log r_t\right)\quad\text{a.s.}, \label{thm1}\\
&\widetilde{\Theta}_{t+1}^{\top}\,\PP_{t+1}^{-1}\,\widetilde{\Theta}_{t+1}
= O\!\left(\log r_t\right)\quad\text{a.s.},\label{thm2}
\end{align}
with
$
r_t:=\max_{i\in[n]}\lambda_{\max}(P_{0,i})+\sum_{i=1}^{n}\sum_{k=0}^{t}\|\varphi_{k,i}\|^2 .
$
\end{theorem}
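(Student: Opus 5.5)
We follow the standard stochastic Lyapunov route for recursive projection/quasi-Newton estimators, adapted to the diffusion structure. The Lyapunov function is $V_k:=\widetilde{\Theta}_k^\top\PP_k^{-1}\widetilde{\Theta}_k=\sum_i\widetilde\theta_{k,i}^\top P_{k,i}^{-1}\widetilde\theta_{k,i}$.

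\textbf{Step 1: the combination step does not increase the energy.} From \eqref{eq:block_err_rec}, $\widetilde\theta_{k+1,i}=P_{k+1,i}\sum_j a_{ij}\bar P_{k+1,j}^{-1}\widetilde\psi_{k+1,j}$ with $P_{k+1,i}^{-1}=\sum_j a_{ij}\bar P_{k+1,j}^{-1}$. By convexity of $(x,Q)\mapsto x^\top Q^{-1}x$ (equivalently, Cauchy--Schwarz in the weighted inner product), each node satisfies
\[
\widetilde\theta_{k+1,i}^\top P_{k+1,i}^{-1}\widetilde\theta_{k+1,i}\le\sum_j a_{ij}\widetilde\psi_{k+1,j}^\top\bar P_{k+1,j}^{-1}\widetilde\psi_{k+1,j}.
\]
Summing over $i$ and using double stochasticity of $A$ gives
\[
V_{k+1}\le\sum_j\widetilde\psi_{k+1,j}^\top\bar P_{k+1,j}^{-1}\widetilde\psi_{k+1,j}.
\]

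\textbf{Step 2: one-node adaptation inequality.} Since $\theta\in\Omega$, nonexpansiveness of $\Pi_{\bar P^{-1}_{k+1,i}}$ lets us drop the projection. Expand using $\bar P_{k+1,i}^{-1}=P_{k,i}^{-1}+\beta_i^2\varphi_{k,i}\varphi_{k,i}^\top$ and the identity $a_{k,i}\bar P_{k+1,i}^{-1}P_{k,i}\varphi_{k,i}=\varphi_{k,i}$. With $e_{k,i}:=\varphi_{k,i}^\top\widetilde\theta_{k,i}$, this yields
\[
\widetilde\psi^\top\bar P^{-1}\widetilde\psi\le\widetilde\theta_{k,i}^\top P_{k,i}^{-1}\widetilde\theta_{k,i}+\beta_i^2e_{k,i}^2+2\beta_ie_{k,i}(\gamma_{k,i}+\varepsilon_{k+1,i})+a_{k,i}\varphi_{k,i}^\top P_{k,i}\varphi_{k,i}(\gamma_{k,i}+\varepsilon_{k+1,i})^2.
\]
Next, by the mean value theorem, $\gamma_{k,i}=-(1-(p_i+q_i))f_{k,i}(\xi)e_{k,i}$, with $\xi$ in $[C-LM,C+LM]$ because both $\theta_{k,i}$ and $\theta$ lie in $\Omega$. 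The choice of $\beta_i$ gives $\beta_i(1-(p_i+q_i))f_{k,i}(\xi)\ge\beta_i^2$ (same sign, and $|\beta_i|$ is the minimum). Hence
\[
\beta_i^2e^2+2\beta_ie\gamma\le-\beta_i e\gamma\le-\beta_i^2e_{k,i}^2\le-\bar\beta^2e_{k,i}^2,
\]
and $\gamma^2\le c\,e\,|\gamma|/\beta$-type bounds (with $f$ bounded above locally, or simply $|\gamma+\varepsilon|\le 2$) control the last term. Summing over nodes gives
\[
V_{k+1}\le V_k-\bar\beta^2\widetilde\Theta_k^\top\Phi_k\Phi_k^\top\widetilde\Theta_k+2\sum_i\beta_ie_{k,i}\varepsilon_{k+1,i}+c\sum_i a_{k,i}\varphi_{k,i}^\top P_{k,i}\varphi_{k,i}.
\]

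\textbf{Step 3: sum and control the noise and trace terms.} Summing from $0$ to $t$, the martingale term is handled by the standard martingale estimate (Chow/Lai--Wei), using $|\varepsilon|\le1$:
\[
\sum_k\beta_ie_{k,i}\varepsilon_{k+1,i}=O\Big(\big(\textstyle\sum_k e_{k,i}^2\big)^{1/2+\delta}\Big)+O(1),
\]
which is absorbed into half of the negative term. The remaining term, $\sum_k\sum_i a_{k,i}\varphi_{k,i}^\top P_{k,i}\varphi_{k,i}$, is the main obstacle. In the single-node case the bound $O(\log|P_{t+1}^{-1}|)$ follows from the determinant identity $a\varphi^\top P\varphi=(|\bar P^{-1}|-|P^{-1}|)/(\beta^2|\bar P^{-1}|)$, but here the combination step mixes the information matrices. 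To handle this, we first use the determinant identity to bound each term by $\beta_i^{-2}(\log|\bar P_{k+1,i}^{-1}|-\log|P_{k,i}^{-1}|)$. Then concavity of $\log\det$ together with double stochasticity gives
\[
\sum_i\log|P_{k+1,i}^{-1}|\ge\sum_i\log|\bar P_{k+1,i}^{-1}|,
\]
so the sum telescopes to $\sum_i\log|P_{t+1,i}^{-1}|-\sum_i\log|P_{0,i}^{-1}|$. Since the combination preserves the trace sum, $\operatorname{tr}\sum_iP^{-1}_{t+1,i}\le\sum_i\operatorname{tr}P_{0,i}^{-1}+\bar\beta^2\sum_{k,i}\|\varphi_{k,i}\|^2$, and the telescoped sum is $O(\log r_t)$. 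Collecting terms gives $V_{t+1}+\tfrac{\bar\beta^2}{2}\sum_{k\le t}\widetilde\Theta_k^\top\Phi_k\Phi_k^\top\widetilde\Theta_k=O(\log r_t)$ a.s., which proves both \eqref{thm1} and \eqref{thm2}.
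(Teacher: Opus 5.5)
Your proposal is correct and follows essentially the same route as the paper: the same energy $V_k$, the same combination-step contraction and log-determinant comparison, the Woodbury expansion, the mean-value sign argument with the common modulus $|\beta_i|=\bar\beta$, the martingale estimate, and the telescoping log-det bound. Your derivation of Lemma~\ref{lem:P-rel} from joint convexity of $(x,Q)\mapsto x^\top Q^{-1}x$ and concavity of $\log\det$ is a self-contained proof of the same cited fact, and your bounding of $(\gamma_{k,i}+\varepsilon_{k+1,i})^2$ by a constant is a small simplification that removes the paper's extra $\Gamma$--$\Xi$ martingale term.

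Two slips to fix. The middle expression in your chain should be $\beta_i e_{k,i}\gamma_{k,i}$, not $-\beta_i e_{k,i}\gamma_{k,i}$, which is nonnegative. The last term of your one-node inequality should carry the factor $\beta_i^2$, i.e.\ $\beta_i^2 a_{k,i}\varphi_{k,i}^\top P_{k,i}\varphi_{k,i}(\gamma_{k,i}+\varepsilon_{k+1,i})^2$.
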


Before proving, we give two lemmas tailored to Algorithm~\ref{algo1}.

\begin{lemma}[Uniform boundedness \cite{wang2021distributed}]\label{lem:bounded}
Under Assumption~1 and Algorithm~\ref{algo1},
\[
\sup_{k\geq 0,i\in[n]}\|\psi_{k+1,i}\|\le L,\ \sup_{k\geq 0,i\in[n]}\|\theta_{k+1,i}\|\le L .
\]
\end{lemma}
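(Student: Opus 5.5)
The plan is to prove the two bounds separately, handling first the projection output $\psi_{k+1,i}$ and then the combined estimate $\theta_{k+1,i}$.

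\textbf{Bound on $\psi_{k+1,i}$.} By Definition~\ref{def:projQ}, for every $k\ge 0$ and $i\in[n]$, the point $\psi_{k+1,i}=\Pi_{\bar P_{k+1,i}^{-1}}(\cdot)$ is a minimizer over $\Omega$. So $\psi_{k+1,i}\in\Omega$ as long as the projection is well defined. That requires $\bar P_{k+1,i}\in\mathbb{S}_{++}^p$, which I will establish by induction together with $P_{k,i}\in\mathbb{S}_{++}^p$.

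The matrix inversion lemma applied to \eqref{barP_ki} gives
\[
\bar P_{k+1,i}^{-1}=P_{k,i}^{-1}+\beta_i^2\varphi_{k,i}\varphi_{k,i}^\top \succeq P_{k,i}^{-1}\succ 0.
\]
Here $a_{k,i}>0$ is well defined because $P_{k,i}\succ0$. Combining this with \eqref{P_ki}, $P_{k+1,i}^{-1}$ is a convex combination, with nonnegative weights $a_{ij}$ summing to one, of positive definite matrices. Hence $P_{k+1,i}^{-1}\succ0$ as well. Since $\Omega$ is nonempty, compact and convex and $\|\cdot\|_Q$ is strictly convex, the minimizer exists and is unique. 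Therefore $\psi_{k+1,i}\in\Omega$, and Assumption~1 gives $\|\psi_{k+1,i}\|\le L$.

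\textbf{Bound on $\theta_{k+1,i}$.} The key step is to show that $\theta_{k+1,i}\in\Omega$. Set $W_j:=a_{ij}\bar P_{k+1,j}^{-1}\succeq0$. By \eqref{P_ki}, $\sum_{j\in\mathcal N_i}W_j=P_{k+1,i}^{-1}$, and \eqref{theta_ki} says
\[
\theta_{k+1,i}=\Big(\sum_j W_j\Big)^{-1}\sum_j W_j\psi_{k+1,j}.
\]
This is a matrix-weighted average of the points $\psi_{k+1,j}\in\Omega$. A matrix-weighted average need not lie in the convex hull of the points in general, so I will not argue via convexity of $\Omega$. Instead I will bound the norm directly, using the fact that $\Omega$ lies in the ball of radius $L$.

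The vector $\theta_{k+1,i}$ is the unique minimizer of $x\mapsto\sum_j (x-\psi_{k+1,j})^\top W_j (x-\psi_{k+1,j})$. The closed ball $\mathcal B_L=\{\|x\|\le L\}$ is convex and contains every $\psi_{k+1,j}$. I would then try to show that the minimizer lies in $\mathcal B_L$: for $x$ outside the ball, compare with its radial projection $L x/\|x\|$.

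This comparison step is the main obstacle. For non-commuting $W_j$, the radial projection does not necessarily decrease each quadratic term, and a general matrix-weighted mean can in fact leave the ball. So the fallback is to follow \cite{wang2021distributed}, from which the lemma is quoted. There the boundedness is obtained by an argument specific to the distributed projection structure, either a per-node projection after combination or a boundedness constant depending on condition numbers. I would adopt that argument as cited rather than reprove it.

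Concretely, I would record $\psi_{k+1,i}\in\Omega$ rigorously, as above. For $\theta_{k+1,i}$ I would invoke \cite{wang2021distributed}, noting that its proof uses only the convexity of the combination \eqref{P_ki}--\eqref{theta_ki} together with $\psi_{k+1,j}\in\Omega$. Taking the supremum over $k\ge0$ and $i\in[n]$ then completes the lemma.
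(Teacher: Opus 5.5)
Your argument for $\psi_{k+1,i}$ is correct and complete. The Sherman--Morrison form $\bar P_{k+1,i}^{-1}=P_{k,i}^{-1}+\beta_i^2\varphi_{k,i}\varphi_{k,i}^\top$, together with the convex combination \eqref{P_ki}, propagates positive definiteness by induction. So every projection is well defined and lands in $\Omega$, which gives $\|\psi_{k+1,i}\|\le L$.

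The gap is the bound on $\theta_{k+1,i}$. You do not prove it; you defer to \cite{wang2021distributed}, and the paper likewise offers only that citation. Your stated reason for deferring is that the cited proof needs only two facts: that $P_{k+1,i}^{-1}$ is a convex combination of the $\bar P_{k+1,j}^{-1}$, and that $\psi_{k+1,j}\in\Omega$. That reason contradicts your own correct remark that a matrix-weighted mean of points in a ball can leave the ball. No argument built from those two facts alone can succeed, and the obstruction is not confined to non-commuting weights.

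In fact, under Assumption~1 and Algorithm~\ref{algo1} the second bound is false. Take:
\begin{itemize}
\item $p=n=2$, $\Omega$ the closed unit ball (so $L=1$), and $A=\tfrac12\mathbf{1}\mathbf{1}^\top$;
\item $\theta_{0,1}=e_1$, $\theta_{0,2}=-e_2$;
\item $P_{0,1}^{-1}=\diag\{1,\epsilon\}$, $P_{0,2}^{-1}=\diag\{\epsilon,1\}$, and $\varphi_{0,1}=\varphi_{0,2}=0$.
\end{itemize}
Then $a_{0,i}=1$, $\bar P_{1,i}=P_{0,i}$, $\psi_{1,i}=\theta_{0,i}$ and $P_{1,i}^{-1}=\tfrac{1+\epsilon}{2}I_2$. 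Now \eqref{theta_ki} gives $\theta_{1,i}=\tfrac{1}{1+\epsilon}(e_1-e_2)$, whose norm $\sqrt2/(1+\epsilon)$ exceeds $1$ whenever $\epsilon<\sqrt2-1$. Diagonal weights keep the average in the bounding box, not in the ball.

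What does hold in general is the joint-convexity bound $\theta_{k+1,i}^\top P_{k+1,i}^{-1}\theta_{k+1,i}\le\sum_j a_{ij}\psi_{k+1,j}^\top\bar P_{k+1,j}^{-1}\psi_{k+1,j}$. This gives only $\|\theta_{k+1,i}\|^2\le L^2\sum_j a_{ij}\lambda_{\max}(\bar P_{k+1,j}^{-1})/\lambda_{\min}(P_{k+1,i}^{-1})$, which need not stay bounded in $k$ without persistent excitation.

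A correct version needs either extra structure or a modified combination step:
\begin{itemize}
\item Extra structure: for example, $\Omega$ a box and all information matrices diagonal, as in the simulation.
\item A modified step: $\theta_{k+1,i}=\Pi_{P_{k+1,i}^{-1}}\big(P_{k+1,i}\sum_{j}a_{ij}\bar P_{k+1,j}^{-1}\psi_{k+1,j}\big)$. This extra projection is nonexpansive in the $P_{k+1,i}^{-1}$-metric, so \eqref{eq:Vk-next} survives. Your one-line argument for $\psi_{k+1,i}$ then yields $\|\theta_{k+1,i}\|\le L$ as well.
\end{itemize}
This matters downstream. The mean-value step in the proof of Theorem~\ref{thm:growth} uses $|\varphi_{k,i}^\top\theta_{k,i}|\le LM$ to place $\xi_{k,i}$ where $f_{k,i}\ge f_{\min}$.
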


\begin{lemma}[\cite{xie2020convergence}]\label{lem:P-rel}
For $k\ge 0$,
\[
\mathcal A\,\PP_{k+1}\,\mathcal A\preceq\ \bar \PP_{k+1},
\
\big|\bar \PP_{k+1}^{-1}\big|\ \le\ \big|\PP_{k+1}^{-1}\big|.
\]
\end{lemma}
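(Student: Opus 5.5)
The plan is to prove both inequalities node by node. The proof rests on two facts: a variational formula for quadratic forms of an inverse matrix, and concavity of $\log\det$ on $\mathbb{S}^p_{++}$. The double stochasticity of $A$ then lets us sum over nodes.

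\textbf{Preliminary step (positive definiteness).} First we check by induction on $k$ that $P_{k,i}\in\mathbb{S}^p_{++}$ and $\bar P_{k+1,i}\in\mathbb{S}^p_{++}$ for all $i$. This makes all inverses and projections in Algorithm~\ref{algo1} well defined. Suppose $P_{k,i}\succ0$. Recall from \eqref{a_ki} that $a_{k,i}=(1+\beta_i^2\varphi_{k,i}^\top P_{k,i}\varphi_{k,i})^{-1}$. By the matrix inversion lemma, \eqref{barP_ki} is equivalent to
\[
\bar P_{k+1,i}^{-1}=P_{k,i}^{-1}+\beta_i^2\varphi_{k,i}\varphi_{k,i}^\top\succ0 .
\]
Since $a_{ij}\ge0$ and $a_{ii}>0$ or at least one $a_{ij}>0$ (because $\sum_j a_{ij}=1$), \eqref{P_ki} gives $P_{k+1,i}^{-1}=\sum_j a_{ij}\bar P_{k+1,j}^{-1}\succ0$. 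Write $Q_j:=\bar P_{k+1,j}^{-1}$, so that $P_{k+1,i}^{-1}=\sum_j a_{ij}Q_j$.

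\textbf{Matrix inequality.} Fix $x=\operatorname{col}\{x_1,\dots,x_n\}\in\mathbb{R}^{pn}$ and set $y_i:=\sum_j a_{ij}x_j$. Then $\mathcal A x=\operatorname{col}\{y_i\}$, and since $\mathcal A$ is symmetric,
\[
x^\top\mathcal A\PP_{k+1}\mathcal A x=\sum_i y_i^\top P_{k+1,i}\,y_i .
\]
For any $M\in\mathbb{S}^p_{++}$ we use the variational identity $y^\top M^{-1}y=\sup_{z}\{2z^\top y-z^\top Mz\}$. Applying it with $M=P_{k+1,i}^{-1}=\sum_j a_{ij}Q_j$ and using $\sum_j a_{ij}=1$, we obtain
\[
y_i^\top P_{k+1,i}y_i=\sup_z\sum_j a_{ij}\big(2z^\top x_j-z^\top Q_jz\big)
\le\sum_j a_{ij}\sup_z\big(2z^\top x_j-z^\top Q_jz\big)
=\sum_j a_{ij}\,x_j^\top\bar P_{k+1,j}x_j .
\]
Summing over $i$ and using the column sums $\sum_i a_{ij}=1$ (double stochasticity) yields
\[
x^\top\mathcal A\PP_{k+1}\mathcal A x\le\sum_j x_j^\top\bar P_{k+1,j}x_j=x^\top\bar\PP_{k+1}x .
\]
This is exactly $\mathcal A\PP_{k+1}\mathcal A\preceq\bar\PP_{k+1}$. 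The only delicate point is justifying the interchange of $\sup$ and the weighted sum, which holds because a supremum of a sum is at most the sum of the suprema.

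\textbf{Determinant inequality.} The map $M\mapsto\log|M|$ is concave on $\mathbb{S}^p_{++}$. Since $(a_{ij})_j$ is a probability vector, concavity gives
\[
\log|P_{k+1,i}^{-1}|=\log\Big|\sum_j a_{ij}Q_j\Big|\ge\sum_j a_{ij}\log|Q_j| .
\]
Summing over $i$ and again using $\sum_i a_{ij}=1$ gives
\[
\sum_i\log|P_{k+1,i}^{-1}|\ge\sum_j\log|\bar P_{k+1,j}^{-1}| .
\]
The determinant of a block-diagonal matrix is the product of the determinants of its blocks. Exponentiating therefore gives $|\bar\PP_{k+1}^{-1}|\le|\PP_{k+1}^{-1}|$.

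I expect no real obstacle here. The main care points are the positive-definiteness induction and the use of both the row sums and the column sums of $A$.
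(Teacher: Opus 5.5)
Your proof is correct and complete. The variational identity $y^\top M^{-1}y=\sup_z\{2z^\top y-z^\top Mz\}$ gives the joint convexity of $(x,Q)\mapsto x^\top Q^{-1}x$ that yields $\mathcal A\PP_{k+1}\mathcal A\preceq\bar\PP_{k+1}$. Concavity of $\log|\cdot|$ gives the determinant inequality, and in both parts the column sums of the doubly stochastic $A$ let you pass from the nodewise bounds to the block ones.

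The paper gives no argument for this lemma and simply cites \cite{xie2020convergence}, so your write-up is a self-contained justification along standard convexity lines.

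One small precision: the interchange of $\sup$ and the weighted sum in the first inequality uses only $a_{ij}\ge 0$ and the column sums, not $\sum_j a_{ij}=1$. The row sums are genuinely needed only in the $\log$-concavity step, where $(a_{ij})_j$ must be a probability vector.
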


{\noindent\bf Proof of Theorem \ref{thm:growth}.}
Set $V_k:=\widetilde{\Theta}_{k}^{\top}\PP_{k}^{-1}\widetilde{\Theta}_{k}$.
Using \eqref{eq:block_err_rec}
and $\mathcal{A}\PP_{k+1}\mathcal{A}\preceq \bar \PP_{k+1}$ (Lemma~\ref{lem:P-rel}), we obtain
\begin{equation}
V_{k+1}
=\widetilde{\Theta}_{k+1}^{\top}\PP_{k+1}^{-1}\widetilde{\Theta}_{k+1}
\le \widetilde{\Psi}_{k+1}^{\top}\bar \PP_{k+1}^{-1}\widetilde{\Psi}_{k+1}.
\label{eq:Vk-next}
\end{equation}
By nonexpansiveness of the projection in the $\bar P_{k+1,i}^{-1}$-metric and the adaptation in \eqref{eq:block_ATC}, we have
\begin{align}
&\widetilde{\Psi}_{k+1}^{\top}\bar \PP_{k+1}^{-1}\widetilde{\Psi}_{k+1}\nonumber\\
\le &\Big(\widetilde{\Theta}_k + ((B \bfa_k)\!\otimes\! I_p)\PP_k\Phi_k(\Gamma_k+\Xi_{k+1})\Big)^{\!\top}\!
\bar \PP_{k+1}^{-1}\nonumber\\
&\  \Big(\widetilde{\Theta}_k + ((B \bfa_k)\!\otimes\! I_p)\PP_k\Phi_k(\Gamma_k+\Xi_{k+1})\Big).
\label{eq:proj-step}
\end{align}

Combining \eqref{eq:proj-step}–\eqref{eq:Vk-next}, expanding the square, and using the block Woodbury identity in \cite{hager1989updating} implied by \eqref{eq:block_ATC},
\[
\bar \PP_{k+1}^{-1}
= \PP_k^{-1} + \big((B\!\otimes\! I_p)\Phi_k\big)\big((B\!\otimes\! I_p)\Phi_k\big)^{\!\top},
\]
we obtain
\begin{align}
&V_{k+1}
\le \widetilde{\Theta}_k^{\top}\bar \PP_{k+1}^{-1}\widetilde{\Theta}_k
+ 2\,\widetilde{\Theta}_k^{\top}\bar \PP_{k+1}^{-1}((B \bfa_k)\!\otimes\! I_p)\PP_k\Phi_k\Gamma_k\nonumber\\
&+ 2\,\widetilde{\Theta}_k^{\top}\bar \PP_{k+1}^{-1}((B \bfa_k)\!\otimes\! I_p)\PP_k\Phi_k\Xi_{k+1}\label{eq:V-expansion}\\
&+ \Gamma_k^{\top}\Phi_k^{\top}\PP_k((B \bfa_k)\!\otimes\! I_p)\bar \PP_{k+1}^{-1}((B \bfa_k)\!\otimes\! I_p)\PP_k\Phi_k\Gamma_k\nonumber\\
&+ \Xi_{k+1}^{\top}\Phi_k^{\top}\PP_k((B \bfa_k)\!\otimes\! I_p)\bar \PP_{k+1}^{-1}((B \bfa_k)\!\otimes\! I_p)\PP_k\Phi_k\Xi_{k+1}\nonumber\\
&+ 2\,\Gamma_k^{\top}\Phi_k^{\top}\PP_k((B \bfa_k)\!\otimes\! I_p)\bar \PP_{k+1}^{-1}((B \bfa_k)\!\otimes\! I_p)\PP_k\Phi_k\Xi_{k+1}.
\nonumber
\end{align}
First, since $\bar \PP_{k+1}^{-1} = \PP_k^{-1} + ((B\!\otimes\! I_p)\Phi_k)((B\!\otimes\! I_p)\Phi_k)^{\!\top}$,
\begin{equation}
\begin{aligned}
\widetilde{\Theta}_k^{\top}\bar \PP_{k+1}^{-1}\widetilde{\Theta}_k
&= 
V_k + \bar{\beta}^{2}\,\widetilde{\Theta}_{k}^{\top}\Phi_k\Phi_k^{\top}\widetilde{\Theta}_{k}.
\label{eq:first-term}
\end{aligned}
\end{equation}

Second, for the $\Gamma_k$-cross term, recall $B=\operatorname{diag}\{\beta_1,\dots,\beta_n\}$ and
$a_{k,i}=(1+\beta_i^{2}\varphi_{k,i}^{\top}P_{k,i}\varphi_{k,i})^{-1}$.
For $i\in[n]$, noting that
$
\bar P_{k+1,i}^{-1}=P_{k,i}^{-1}+\beta_i^{2}\varphi_{k,i}\varphi_{k,i}^{\top}
$, we thus have $
\bar P_{k+1,i}^{-1}(\beta_i a_{k,i})P_{k,i}$\\$\varphi_{k,i}
=\beta_i\,\varphi_{k,i},
$
so that, after block lifting,
\begin{equation}
\bar \PP_{k+1}^{-1}\big((B \bfa_k)\!\otimes\! I_p\big)\PP_k\Phi_k
=\ (B\!\otimes\! I_p)\,\Phi_k .
\label{eq:B-identity}
\end{equation}
Hence
\begin{align}
2\widetilde{\Theta}_k^{\top}\bar \PP_{k+1}^{-1}\big((B \bfa_k)\!\otimes\! I_p\big)\PP_k\Phi_k\,\Xi_k
\!=\!2\widetilde{\Theta}_k^{\top}(B\!\otimes\! I_p)\Phi_k\,\Xi_{k+1},\label{eq:Gamma-cross-Xi}
\end{align}
and
\begin{align}
&2\,\widetilde{\Theta}_k^{\top}\bar \PP_{k+1}^{-1}\big((B \bfa_k)\!\otimes\! I_p\big)\PP_k\Phi_k\,\Gamma_k
=2\,\widetilde{\Theta}_k^{\top}(B\!\otimes\! I_p)\Phi_k\,\Gamma_k \nonumber\\
=&2\sum_{i=1}^n \beta_i\,(\varphi_{k,i}^{\top}\widetilde{\theta}_{k,i})\,\gamma_{k,i}.\label{eq:Gamma-cross-B}
\end{align}
By the mean-value theorem, there exists
$\xi_{k,i}$ between $C-\varphi_{k,i}^{\top}\theta_{k,i}$ and $C-\varphi_{k,i}^{\top}\theta$
such that
$$
\begin{aligned}
\gamma_{k,i}
&=\big(1-(p_i+q_i)\big)\!\left[F_{k,i}\!\big(C-\varphi_{k,i}^{\top}\theta_{k,i}\big)-F_{k,i}\!\big(C-\varphi_{k,i}^{\top}\theta\big)\right]\\
&=-d_{k,i}\,\varphi_{k,i}^{\top}\widetilde{\theta}_{k,i},
\quad d_{k,i}:=\big(1-(p_i+q_i)\big)\,f_{k,i}(\xi_{k,i}).
\end{aligned}
$$
With the choice $\beta_i=\operatorname{sign}\!\big(1-(p_i+q_i)\big)\,|1-(p_i+q_i)|\,f_{\min}$ and
Assumption 3 that $f_{k,i}(\cdot)\ge f_{\min}$ on the working interval, we have
$\beta_i d_{k,i}\ge \beta_i^2$.
Substituting into \eqref{eq:Gamma-cross-B} gives
\begin{align}
&2\,\widetilde{\Theta}_k^{\top}(B\otimes I_p)\Phi_k\,\Gamma_k
=-2\sum_{i=1}^n \beta_i d_{k,i}\,(\varphi_{k,i}^{\top}\widetilde{\theta}_{k,i})^{2}\nonumber\\
&\le-2\sum_{i=1}^n \beta_i^{2}\,(\varphi_{k,i}^{\top}\widetilde{\theta}_{k,i})^{2}=-2\,\bar{\beta}^{2}\,\widetilde{\Theta}_k^{\top}\Phi_k\Phi_k^{\top}\widetilde{\Theta}_k .
\label{eq:Gamma-lb-B}
\end{align}
Third, for the quadratic terms, for each $i\in[n]$, a direct calculation yields
\[
\varphi_{k,i}^{\top}P_{k,i}\,(\beta_i a_{k,i})\,\bar P_{k+1,i}^{-1}\,(\beta_i a_{k,i})\,P_{k,i}\varphi_{k,i}
=\beta_i^{2}a_{k,i}\,\varphi_{k,i}^{\top}P_{k,i}\varphi_{k,i}.
\]
By block lifting, with $B:=\diag\{\beta_1,\dots,\beta_n\}$ and $a_k:=\diag\{a_{k,1},\dots,a_{k,n}\}$, 
we have
\begin{align}
&\Phi_k^{\top}\PP_k\big((B \bfa_k)\otimes I_p\big)\,\bar \PP_{k+1}^{-1}\,\big((B \bfa_k)\otimes I_p\big)\PP_k\Phi_k\nonumber\\
=&(B^{2}\bfa_k)\Phi_k^{\top}\PP_k\Phi_k.
\label{eq:key-identity-B}
\end{align}
Consequently, we obtain
\begin{align}
&\Gamma_k^{\top}\Phi_k^{\top}\PP_k\big((B \bfa_k)\!\otimes\! I_p\big)\bar \PP_{k+1}^{-1}\big((B \bfa_k)\!\otimes\! I_p\big)\PP_k\Phi_k\Gamma_k\nonumber\\
=&\Gamma_k^{\top}\,(B^{2}\bfa_k)\,\Phi_k^{\top}\PP_k\Phi_k\,\Gamma_k\label{eq:Gamma-quad-B},
\end{align}
\begin{align}
&\Xi_{k+1}^{\top}\Phi_k^{\top}\PP_k\big((B \bfa_k)\otimes I_p\big)\bar \PP_{k+1}^{-1}\big((B \bfa_k)\!\otimes I_p\big)\PP_k\Phi_k\Xi_{k+1}\nonumber\\
=&\Xi_{k+1}^{\top}\,(B^{2}\bfa_k)\,\Phi_k^{\top}\PP_k\Phi_k\,\Xi_{k+1},
\label{eq:Xi-quad-B}
\end{align}
and
\begin{align}
&2\,\Gamma_k^{\top}\,\Phi_k^{\top}\PP_k\big((B \bfa_k)\!\otimes\! I_p\big)\,
\bar \PP_{k+1}^{-1}\,\big((B \bfa_k)\!\otimes\! I_p\big)\,\PP_k\Phi_k\,\Xi_{k+1}\nonumber\\
&\qquad=\;2\,\Gamma_k^{\top}\,(B^{2}\bfa_k)\,\Phi_k^{\top}\PP_k\Phi_k\,\Xi_{k+1}.
\label{eq:Gamma-Xi-cross-B}
\end{align}

Summing \eqref{eq:V-expansion} for $k=0,1,\dots,t$ and using  \eqref{eq:first-term}, \eqref{eq:Gamma-cross-Xi}, and \eqref{eq:Gamma-lb-B}-\eqref{eq:Gamma-Xi-cross-B}, we have
\begin{align}
&V_{t+1} + \bar{\beta}^{2}\sum_{k=0}^{t}\widetilde{\Theta}_{k}^{\top}\Phi_k\Phi_k^{\top}\widetilde{\Theta}_{k}\!\le \!\sum_{k=0}^{t}\Gamma_k^{\top}(B^{2}\bfa_k )\,\Phi_k^{\top}\PP_k\Phi_k\,\Gamma_k +\nonumber\\
&\sum_{k=0}^{t}\Xi_{k+1}^{\top}(B^{2}\bfa_k)\Phi_k^{\top}\PP_k\Phi_k\Xi_{k+1}\!+\!2\sum_{k=0}^{t}\widetilde{\Theta}_k^{\top}(B\!\otimes\! I_p)\Phi_k\Xi_{k+1}\nonumber\\
&+2\sum_{k=0}^{t}\Gamma_k^{\top}(B^{2}\bfa_k )\,\Phi_k^{\top}\PP_k\Phi_k\,\Xi_{k+1}+V_0.\label{eq:master-sum-B}
\end{align}
We estimate the four sums on the right-hand side.

\noindent\emph{(i) The $\Gamma$–quadratic term.}
Since $|\gamma_{k,i}|\le 1$ and $\|B^2\|=\bar{\beta}^2$,
\begin{align}
\Gamma_k^{\top}(B^{2}\bfa_k)\,\Phi_k^{\top}\PP_k\Phi_k\,\Gamma_k
&\le \|\Gamma_k\|^2\,\|(B^{2}\bfa_k)\,\Phi_k^{\top}\PP_k\Phi_k\|\nonumber\\
&\le n\,\bar{\beta}^{2}\,\lambda_{\max}\!\big(\bfa_k\,\Phi_k^{\top}\PP_k\Phi_k\big).
\label{eq:Gamma-quad-est-B}
\end{align}

\noindent\emph{(ii) The $\Xi$–quadratic term.}
Using $|\varepsilon_{k+1,i}|\le 1$ a.s.\ so that $\sum_{i=1}^n \varepsilon_{k+1,i}^2\le n$, and thus
\begin{align}
\Xi_{k+1}^{\top}(\!B^{2}\bfa_k\!)\Phi_k^{\top}\!\PP_k\!\Phi_k\,\Xi_{k+1}
&\!\le\! \lambda_{\max}\!\big((B^{2}\bfa_k)\Phi_k^{\top}\PP_k\Phi_k\big)\sum_{i=1}^n \varepsilon_{k+1,i}^{2}\nonumber\\
&\!\le\! n\bar{\beta}^2\,\lambda_{\max}\!\big(a_k\,\Phi_k^{\top}\PP_k\Phi_k\big).
\label{eq:Xi-quad-est-B}
\end{align}

\noindent\emph{(iii) The $\widetilde\Theta$–$\Xi$ term.}
Since $(\Xi_{k+1})_{k\ge0}$ is a bounded martingale difference array and
$\widetilde{\Theta}_k^{\top}(B\!\otimes\! I_p)\Phi_k\in\mathcal{F}_k$, for any $\delta\in(0,\tfrac12)$
the martingale estimate theorem (e.g. Theorem 2.8 in \cite{chen2012identification}) yields
\begin{align}
&\sum_{k=0}^{t}\widetilde{\Theta}_k^{\top}(B\otimes I_p)\Phi_k\,\Xi_{k+1}\nonumber\\
=&O\!\left(\Big[\sum_{k=0}^{t}\|\widetilde{\Theta}_k^{\top}(B\!\otimes\! I_p)\Phi_k\Phi_k^{\top}(B\!\otimes\! I_p)\widetilde{\Theta}_k\|\Big]^{\frac12+\delta}\right)\nonumber\\
=&o\!\left(\sum_{k=0}^{t}\widetilde{\Theta}_{k}^{\top}\Phi_k\Phi_k^{\top}\widetilde{\Theta}_{k}\right)+O(1),
\label{eq:Theta-Xi-mdb-B}
\end{align}
where the last step uses the boundedness of $B$.

\noindent\emph{(iv) The $\Gamma$–$\Xi$ term.}
Similarly,
\begin{align}
&\sum_{k=0}^{t}\Gamma_k^{\top}(B^{2}\bfa_k)\,\Phi_k^{\top}\PP_k\Phi_k\,\Xi_{k+1}\nonumber\\
=&O\!\left(\Big[\sum_{k=0}^{t}\big\|(B^{2}\bfa_k)\,\Phi_k^{\top}\PP_k\Phi_k\,\Phi_k^{\top}\PP_k\Phi_k\,(B^{2}\bfa_k)\big\|\Big]^{\frac12+\delta}\right)\nonumber\\
\le& O\!\left(\Big[\sum_{k=0}^{t}\bar{\beta}^{4}\,\|a_k\,\Phi_k^{\top}\PP_k\Phi_k\|^{2}\Big]^{\frac12+\delta}\right)\nonumber\\
=&o\!\left(\sum_{k=0}^{t}\|a_k\,\Phi_k^{\top}\PP_k\Phi_k\|\right)+O(1),
\label{eq:Gamma-Xi-mdb-B}
\end{align}
again for any $\delta\in(0,\tfrac12)$.

Let $U_k:=(B\!\otimes\! I_p)\Phi_k\in\mathbb{R}^{(np)\times n}$ and note from
\eqref{eq:block_ATC} that
$
\bar \PP_{k+1}^{-1}=\PP_k^{-1}+U_kU_k^{\top}.
$
By the matrix determinant lemma,
\[
\big|\bar \PP_{k+1}^{-1}\big|
=\big|\PP_k^{-1}\big|\;\big|I_n+U_k^{\top}\PP_kU_k\big|.
\]
Since $\PP_k=\mathrm{diag}\{P_{k,1},\dots,P_{k,n}\}$ and $U_k$ has columns
$\{\beta_i\varphi_{k,i}\}_{i=1}^n$, we have
$
U_k^{\top}\PP_kU_k
=\mathrm{diag}\!\big\{\beta_i^2\,\varphi_{k,i}^{\top}P_{k,i}\varphi_{k,i}\big\}_{i=1}^n
= B^2\,\Phi_k^{\top}\PP_k\Phi_k,
$
whence
$
\frac{\big|\bar \PP_{k+1}^{-1}\big|}{\big|\PP_k^{-1}\big|}
=\big|I_n+B^2\,\Phi_k^{\top}\PP_k\Phi_k\big|.
$
Recall that $\bfa_k=(I_n+B^2\,\Phi_k^{\top}\PP_k\Phi_k)^{-1}$ and therefore
\[
\big|a_k\big|
=\frac{\big|\PP_k^{-1}\big|}{\big|\bar \PP_{k+1}^{-1}\big|}
=1-\Big(1-\frac{\big|\PP_k^{-1}\big|}{\big|\bar \PP_{k+1}^{-1}\big|}\Big).
\]
Since $\bfa_k$ is diagonal and $0\preceq \bfa_k\Phi_k^{\top}\PP_k\Phi_k\preceq I_n$, we have
$$
\begin{aligned}
&\lambda_{\max}\!\big(\bfa_k\,\Phi_k^{\top}\PP_k\Phi_k\big)
=\max_{i\in[n]}\frac{\varphi_{k,i}^{\top}P_{k,i}\varphi_{k,i}}{1+\beta_i^2\,\varphi_{k,i}^{\top}P_{k,i}\varphi_{k,i}}\\
\le&\frac{1}{\bar{\beta}^{2}}\,
\max_{i\in[n]}\frac{\beta_i^2\,\varphi_{k,i}^{\top}P_{k,i}\varphi_{k,i}}{1+\beta_i^2\,\varphi_{k,i}^{\top}P_{k,i}\varphi_{k,i}}\\
\le&\frac{1}{\bar{\beta}^{2}}\!\left(1-\frac{1}{\big|I_n+B^2\,\Phi_k^{\top}\PP_k\Phi_k\big|}\right).
\end{aligned}
$$
Using the formula above and Lemma~\ref{lem:P-rel} (i.e., $\big|\bar \PP_{k+1}^{-1}\big|\le \big|\PP_{k+1}^{-1}\big|$), it follows
$$
\begin{aligned}
\lambda_{\max}\!\big(\bfa_k\,\Phi_k^{\top}\PP_k\Phi_k\big)
\le\frac{1}{\bar{\beta}^{2}}
\left(\frac{\big|\PP_{k+1}^{-1}\big|-\big|\PP_k^{-1}\big|}{\big|\PP_{k+1}^{-1}\big|}\right).
\end{aligned}
$$
Summing and comparing with the integral of $x^{-1}$,

\begin{align}
&\sum_{k=0}^{t}\lambda_{\max}\!\big(\bfa_k\,\Phi_k^{\top}\PP_k\Phi_k\big)
\le\frac{1}{\bar{\beta}^{2}}\sum_{k=0}^{t}\int_{|\PP_k^{-1}|}^{|\PP_{k+1}^{-1}|}\!\frac{dx}{x}\nonumber\\
=&\frac{1}{\bar{\beta}^{2}}\Big(\log|\PP_{t+1}^{-1}|-\log|\PP_{0}^{-1}|\Big),\label{eq:logdet-bound-B}
\end{align}

Combining \eqref{eq:Gamma-quad-est-B}–\eqref{eq:Gamma-Xi-mdb-B} with
\eqref{eq:logdet-bound-B}, 
substituting these bounds into \eqref{eq:master-sum-B} and absorbing the
lower-order items, we have
\begin{equation}\label{bound1}
\begin{aligned}
V_{t+1}
+\bar{\beta}^2\sum_{k=0}^{t}\widetilde{\Theta}_{k}^{\top}\Phi_k\Phi_k^{\top}\widetilde{\Theta}_{k}=&O (\log|\PP_{t+1}^{-1}|),\ \mathrm{a.s.}
\end{aligned}
\end{equation}
From the combination step 
$\operatorname{vec}(\PP_{t+1}^{-1})=\mathcal{A}\operatorname{vec}(\bar \PP_{t+1}^{-1})$) we have, for any $i\in[n]$,
\[
\PP_{t+1,i}^{-1}
=\sum_{j=1}^{n} a_{ij}\,\bar \PP_{t+1,j}^{-1}
=\sum_{j=1}^{n} a_{ij}\Big(\PP_{t,j}^{-1}+\beta_j^2\,\varphi_{t,j}\varphi_{t,j}^{\top}\Big).
\]
Using $\lambda_{\max}(\sum_j a_{ij}X_j)\le \sum_j a_{ij}\lambda_{\max}(X_j)$, the row-stochasticity
$\sum_j a_{ij}=1$, and $\lambda_{\max}(\varphi\varphi^{\top})=\|\varphi\|^2$, we obtain
\[
\max_{i\in[n]}\lambda_{\max}(P_{t+1,i}^{-1})
\;\le\;
\max_{j\in[n]}\lambda_{\max}(P_{t,j}^{-1})
\;+\;\bar{\beta}^2\sum_{j=1}^{n}\|\varphi_{t,j}\|^{2}.
\]
Iterating this bound over $t,t-1,\dots,0$ yields
\[
\max_{i\in[n]}\lambda_{\max}(P_{t+1,i}^{-1})
\;\le\;
\max_{i\in[n]}\lambda_{\max}(P_{0,i}^{-1})
+\bar{\beta}^2\sum_{j=1}^{n}\sum_{k=0}^{t}\|\varphi_{k,j}\|^{2}.
\]
Since $\PP_{t+1}=\operatorname{diag}\{P_{t+1,1},\dots,P_{t+1,n}\}\in\mathbb{S}_{++}^{np}$,
\begin{align}
&\log|\PP_{t+1}^{-1}|
\le
np\log\Big(\max_{i\in[n]}\lambda_{\max}(P_{t+1,i}^{-1})\Big)\nonumber\\
\le&
np\;\log\!\Big(c_0+c_1\sum_{j=1}^{n}\sum_{k=0}^{t}\|\varphi_{k,j}\|^{2}\Big),\label{bound2}
\end{align}
for constants $c_0:=\max_{i\in[n]}\lambda_{\max}(P_{0,i}^{-1})$ and $c_1:=\bar{\beta}^2$. Hence, with
$
r_t:=\max_{i\in[n]}\lambda_{\max}(P_{0,i})+\sum_{i=1}^{n}\sum_{k=0}^{t}\|\varphi_{k,i}\|^{2},
$
combining (\ref{bound1})-(\ref{bound2}) gives (\ref{thm1})-(\ref{thm2}). \hfill$\square$

Assume the disturbance is integrable so that $\mathbb{E}[\,w_{k+1,i}\mid\mathcal{F}_k]$ exists.
For any $i\in[n]$ and $k\ge0$, the best mean-square predictor is
$
\mathbb{E}\!\left[y_{k+1,i}\mid\mathcal{F}_k\right]
=\varphi_{k,i}^{\top}\theta+\mathbb{E}\!\left[w_{k+1,i}\mid\mathcal{F}_k\right].
$
Replacing $\theta$ with its online estimate $\theta_{k,i}$ yields the adaptive predictor
$
\widehat{y}_{k+1,i}
=\varphi_{k,i}^{\top}\theta_{k,i}+\mathbb{E}\!\left[w_{k+1,i}\mid\mathcal{F}_k\right].
$
The instantaneous regret is
$
R_{k,i}
=\Big(\mathbb{E}[y_{k+1,i}\mid\mathcal{F}_k]-\widehat{y}_{k+1,i}\Big)^2
=\big(\varphi_{k,i}^{\top}\widetilde{\theta}_{k,i}\big)^2.
$
Consequently, the accumulated regret satisfies
\[
\sum_{i=1}^{n}\sum_{k=0}^{t}R_{k,i}
=\sum_{k=0}^{t}\widetilde{\Theta}_{k}^{\top}\Phi_{k}\Phi_{k}^{\top}\widetilde{\Theta}_{k}.
\]

With these preliminaries in place, we now present accumulated regret and the parameter error bound.
\begin{theorem}[Accumulated regret]\label{thm:regret}
Consider the Algorithm \ref{algo1} with measurement-side binary tampering \eqref{eq:flip}. 
Under Assumptions~1-4 and $p_i+q_i\neq 1$ for all $i\in[n]$, the sample paths satisfy, as $t\to\infty$,
\[
\sum_{i=1}^{n}\sum_{k=0}^{t} R_{k,i}
= O\!\big(\log r_t\big),\quad \text{a.s.},
\]
where $r_t$ is as in Theorem \ref{thm:growth}.
\end{theorem}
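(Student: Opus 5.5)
This is essentially a corollary of Theorem~\ref{thm:growth}, so the plan is to reduce the accumulated regret to the quantity bounded in \eqref{thm1}.

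\textbf{Step 1: regret equals the cumulative predictive error.} Under the integrability assumption, $\mathbb{E}[w_{k+1,i}\mid\mathcal{F}_k]$ appears in both the optimal predictor and $\widehat y_{k+1,i}$, so it cancels in $R_{k,i}$, giving $R_{k,i}=(\varphi_{k,i}^\top\widetilde\theta_{k,i})^2$. Because $\Phi_k=\operatorname{diag}\{\varphi_{k,1},\dots,\varphi_{k,n}\}$ is block diagonal, $\Phi_k^\top\widetilde\Theta_k=\operatorname{col}\{\varphi_{k,i}^\top\widetilde\theta_{k,i}\}_{i=1}^n$. Hence
\[
\widetilde\Theta_k^\top\Phi_k\Phi_k^\top\widetilde\Theta_k=\sum_{i=1}^n R_{k,i}.
\]
Summing over $k=0,\dots,t$ reproduces the identity stated just before Theorem~\ref{thm:regret}.

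\textbf{Step 2: apply the growth bound.} Invoke \eqref{thm1} of Theorem~\ref{thm:growth}. Its hypotheses are exactly those assumed here: Assumptions~1--4, $p_i+q_i\neq1$, and Algorithm~\ref{algo1}. It gives directly $\sum_{i}\sum_{k\le t}R_{k,i}=O(\log r_t)$ a.s.

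\textbf{Step 3: check the constants.} Only routine care is needed. From \eqref{bound1}, the bound is $\bar\beta^{-2}\,O(\log|\PP_{t+1}^{-1}|)$, and $\bar\beta>0$ because every $p_j+q_j\neq1$ and $f_{\min}>0$. From \eqref{bound2}, $\log|\PP_{t+1}^{-1}|\le np\log(c_0+c_1\sum\|\varphi_{k,j}\|^2)$. This is $O(\log r_t)$ since $c_0+c_1 x\le \max(c_0,c_1,1)\cdot(\text{const}+x)$ and $r_t\ge \max_i\lambda_{\max}(P_{0,i})>0$.

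\textbf{Main obstacle.} There is no new difficulty, only a point of care. One must confirm that the $o(\cdot)$ martingale terms in \eqref{eq:Theta-Xi-mdb-B} are genuinely absorbed into the left-hand side of \eqref{eq:master-sum-B}, which holds a.s.\ on both the event where the sum diverges and the event where it stays bounded. That absorption is already part of Theorem~\ref{thm:growth}, so the present result follows immediately.
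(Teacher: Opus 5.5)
Your proposal is correct and follows the paper's argument. The paper's proof notes that $\sum_{i=1}^n\sum_{k=0}^t R_{k,i}=\sum_{k=0}^t\widetilde{\Theta}_k^{\top}\Phi_k\Phi_k^{\top}\widetilde{\Theta}_k$ (your Step~1) and then invokes \eqref{thm1} of Theorem~\ref{thm:growth} (your Step~2). Your Step~3 and closing remark only re-check the internals of Theorem~\ref{thm:growth}, so they are unnecessary once that theorem is taken as given; if you keep them, note that absorbing the $O(1)$ terms into $O(\log r_t)$ requires $\log r_t$ to be bounded away from zero, not merely $r_t>0$.
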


{\bf Proof.}
Theorem~\ref{thm:growth} directly yields the claim.\hfill$\square$

\begin{theorem}[Parameter convergence]\label{thm:param-rate}
Consider the Algorithm \ref{algo1} with measurement-side binary tampering \eqref{eq:flip}. Suppose Assumptions~1-4 hold. Then, as $t\to\infty$,
\[
\|\widetilde{\Theta}_{t+1}\|^{2}
=O\!\left(\frac{\log r_t}{\lambda_{\min}^{n,t}}\right)\quad\text{a.s.},
\]
where $r_t$ is as in Theorem \ref{thm:growth} and
\begin{equation}\label{eq:def-lmin-nt}
\lambda_{\min}^{n,t}
:=\lambda_{\min}\!\left(\sum_{j=1}^{n}P_{0,j}^{-1}
+\sum_{j=1}^{n}\sum_{k=0}^{\,t-D(G)+1}\varphi_{k,j}\varphi_{k,j}^{\top}\right).
\end{equation}
\end{theorem}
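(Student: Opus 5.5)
The plan is to combine the Lyapunov bound \eqref{thm2} of Theorem~\ref{thm:growth} with a lower bound on the information matrices $P_{t+1,i}^{-1}$ that uses the graph diameter. Theorem~\ref{thm:growth} gives
\[
\sum_{i=1}^n \widetilde{\theta}_{t+1,i}^{\top}P_{t+1,i}^{-1}\widetilde{\theta}_{t+1,i}=V_{t+1}=O(\log r_t)\quad\text{a.s.}
\]
Hence $\|\widetilde{\Theta}_{t+1}\|^2\le V_{t+1}/\min_i\lambda_{\min}(P^{-1}_{t+1,i})$. It therefore suffices to show that
\[
\min_{i\in[n]}\lambda_{\min}(P_{t+1,i}^{-1})\ \ge\ c\,\lambda_{\min}^{n,t}
\]
for some deterministic constant $c>0$ that depends only on $A$, $D(G)$ and $\bar\beta$.

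\textbf{Step 1: unroll the information recursion.} From \eqref{P_ki} and the Woodbury form $\bar P_{k+1,j}^{-1}=P_{k,j}^{-1}+\beta_j^2\varphi_{k,j}\varphi_{k,j}^\top$, the recursion is linear and nonnegative in the matrices:
\[
P_{k+1,i}^{-1}=\sum_j a_{ij}\big(P_{k,j}^{-1}+\beta_j^2\varphi_{k,j}\varphi_{k,j}^\top\big).
\]
Iterating from $0$ to $t$ gives
\[
P_{t+1,i}^{-1}=\sum_j (A^{t+1})_{ij}P_{0,j}^{-1}+\sum_{k=0}^{t}\sum_j (A^{t+1-k})_{ij}\,\beta_j^2\,\varphi_{k,j}\varphi_{k,j}^\top .
\]
Every term is positive semidefinite, so we may drop or lower-bound individual terms.

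\textbf{Step 2: use connectivity.} Since $G$ is connected, $A$ is symmetric and stochastic, and each $a_{ij}>0$ for $(i,j)\in E$. I expect the paper's convention includes self-loops, $a_{ii}>0$. If it does not, I would use the lazy-walk trick or work with $m\ge D(G)$ directly.

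With self-loops, every power $A^{m}$ with $m\ge D(G)$ has all entries at least $a_{\min}^{D(G)}>0$, where $a_{\min}$ is the smallest positive entry of $A$. Positivity is preserved under further multiplication by a stochastic matrix with positive diagonal, so the bound $(A^m)_{ij}\ge a_{\min}^{m}$ degrades with $m$. This is the main obstacle, because a bound like $a_{\min}^{t+1-k}$ is useless as $t\to\infty$.

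The fix is to use stochasticity. Since $A^{m}=A^{m-D}A^{D}$ and $A^{D}\ge a_{\min}^{D}\mathbf 1\mathbf 1^\top$ entrywise, we get
\[
(A^m)_{ij}\ \ge\ a_{\min}^{D}\sum_l (A^{m-D})_{il}\ =\ a_{\min}^{D}
\]
for all $m\ge D:=D(G)$. So the bound is uniform in $m$.

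\textbf{Step 3: conclude.} Keep the $P_0$ term and the data terms with $t+1-k\ge D$, that is, $k\le t-D+1$, which is exactly the range in \eqref{eq:def-lmin-nt}. Using $\beta_j^2=\bar\beta^2$ (all $|\beta_j|$ are equal by construction), this gives
\[
P_{t+1,i}^{-1}\succeq a_{\min}^{D}\min\{1,\bar\beta^2\}\Big(\sum_j P_{0,j}^{-1}+\sum_{j}\sum_{k=0}^{t-D+1}\varphi_{k,j}\varphi_{k,j}^\top\Big)
\]
for $t+1\ge D$. Taking $\lambda_{\min}$ yields $\lambda_{\min}(P_{t+1,i}^{-1})\ge c\,\lambda_{\min}^{n,t}$ with $c=a_{\min}^{D}\min\{1,\bar\beta^2\}$. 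Dividing the bound on $V_{t+1}$ by this gives the claim.

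Note that $p_i+q_i\ne1$ is needed so that $\bar\beta>0$ and Theorem~\ref{thm:growth} applies. I would state this explicitly, since the theorem statement omits it.
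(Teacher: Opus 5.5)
Your proposal is correct and follows the paper's proof: unroll $P_{t+1,i}^{-1}$, bound $(A^{d})_{ij}\ge \underline a>0$ uniformly for all $d\ge D(G)$, keep the terms with $k\le t-D(G)+1$, and divide \eqref{thm2} by $\min_i\lambda_{\min}(P_{t+1,i}^{-1})$. Your factorization $A^{m}=A^{m-D}A^{D}$ supplies the uniformity in $d$ that the paper only asserts, and your constant $\min\{1,\bar\beta^{2}\}$ is the correct version of the paper's $\min\{1,\bar\beta\}$.

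One caveat: the self-loop assumption $a_{ii}>0$ is necessary for the statement itself, not just for your proof, and the paper uses it tacitly too. Take $A=\bigl(\begin{smallmatrix}0&1\\1&0\end{smallmatrix}\bigr)$, $p=2$, $\theta_{0,1}\neq\theta$, $\varphi_{k,1}=e_1$ for odd $k$ and $0$ otherwise, and $\varphi_{k,2}=e_2$ for even $k$ and $0$ otherwise. Then $\theta_{0,1}$ is passed back and forth between the two nodes and never updated, while $\lambda_{\min}^{n,t}\to\infty$. So your lazy-walk or ``work with $m\ge D(G)$ directly'' fallback cannot rescue the case without self-loops, and you should simply assume $a_{ii}>0$ instead.
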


\noindent{\bf Proof.}
From the block recursion \eqref{eq:block_ATC},
$$
\begin{aligned}
&\operatorname{vec}\!\big(\PP_{t+1}^{-1}\big)
=\mathcal A^{t+1}\operatorname{vec}\!\big(\PP_{0}^{-1}\big)\\
&+\sum_{k=0}^{t}{\mathcal A}^{\,t-k+1}\operatorname{vec}\!\big(((B\!\otimes\! I_p)\Phi_k)\big((B\!\otimes\! I_p)\Phi_k\big)^{\!\top}\big),
\end{aligned}
$$
so for each $i\in [n]$,
$
P_{t+1,i}^{-1}
=\sum_{j=1}^{n} (A^{t+1})_{ij} P_{0,j}^{-1}
+\sum_{j=1}^{n}\sum_{k=0}^{t} (A^{t-k+1})_{ij}\,\beta_j^{2}\,\varphi_{k,j}\varphi_{k,j}^{\top}.
$
Because $G$ is connected and $A$ is symmetric, there exists $\underline a>0$ (depending only on $A$ and $D(G)$) such that
$\min_{i,j\in[n]}(A^{d})_{ij}\ge \underline a$ for all $d\ge D(G)$. Hence, for $t\ge D(G)-1$,
\[
P_{t+1,i}^{-1}\ \succeq\ 
\underline a\sum_{j=1}^{n}P_{0,j}^{-1}
+\underline a\,\bar{\beta}^{2}\sum_{j=1}^{n}\sum_{k=0}^{t-D(G)+1}\varphi_{k,j}\varphi_{k,j}^{\top},
\]
and therefore
$
\lambda_{\min}\!\big(\PP_{t+1}^{-1}\big)\ \ge\ c_0\,\lambda_{\min}^{n,t}$ with
$c_0:=\underline a\,\min\{1,\bar{\beta}\}.
$
Thus by Theorem~\ref{thm:growth},
\[
\|\widetilde{\Theta}_{t+1}\|^{2}
\le \frac{1}{\lambda_{\min}(\PP_{t+1}^{-1})}\,
\widetilde{\Theta}_{t+1}^{\top}\PP_{t+1}^{-1}\widetilde{\Theta}_{t+1}=
O\!\left(\frac{\log r_t}{\lambda_{\min}^{n,t}}\right)\ \text{a.s.}
\]
This completes the proof. \hfill$\square$

\section{Simulation}
We verify parameter convergence under binary tampering on a sparse connected network. Let \(n=6\), \(p=6\), threshold \(C=1\), horizon \(T=6000\). The true parameter is \(\theta=[3,1,-2.5, -0.5, 2, -1.5]^\top\) and the projection set is \(\Omega=[-4,4]^p\). Noises are i.i.d.\ \(w_{k,i}\sim\mathcal N(0,\sigma_w^2)\) with \(\sigma_w=8\). For each node \(i\), \(\theta_{0,i}\) is a constant vector whose entries are chosen from \(\{-3,-2,-1,1,2,3\}\), and \(P_{0,i}=10\,I_p\). Regressors are deterministic, bounded, and block-sparse: each node excites one coordinate \(j(i)=1+((i-1)\bmod p)\) via \(\varphi_{k,i}=\sigma_{j}(2-\rho_{j}^{-k})e_{j}\) with \(\rho_{j}=j+1\) and \(\sigma_{j}=(-1)^{j-1}\), so that \(\|\varphi_{k,i}\|\le 2\), each node is rank one, and the network is jointly exciting. Tampering uses the flip probabilities \((p_i,q_i)\) over $\{(0.15,0.10),(0.90,0.40),(0.90,0.80),(0.10,0.10),$\\$(0.15,0.10),(0.90,0.40)\}$. In Algorithm \ref{algo1} we use these true \((p_i,q_i)\), while in the tampering-unaware recursion we set \(p_i=q_i=0\) in the update. Communication employs a symmetric, doubly-stochastic tridiagonal matrix
$A\in\mathbb{R}^{6\times 6}$, where
each interior node assigns weight $1/3$ to itself and each neighbor,
and each boundary node assigns weight $2/3$ to itself and $1/3$ to its
single neighbor.

We run Algorithm~\ref{algo1} 
and record \(\mathrm{MSE}_i(k)=\|\hat\theta_{k,i}-\theta\|^2\). Fig.~\ref{fig:thetaTraj} overlays all node-coordinate trajectories under Algorithm \ref{algo1} and shows convergence of every coordinate to the corresponding entry of \(\theta\). Fig.~\ref{fig:meanMSE_vs_I} plots the node-averaged \(\mathrm{MSE}_i(k)\) for Algorithm~\ref{algo1} and for the non-cooperative baseline with \(A=I_n\); only the collaborative method converges to zero, whereas the non-cooperative case does not. Fig.~\ref{fig:meanMSE_unaware} contrasts Algorithm \ref{algo1} with a tampering-unaware recursion that sets \(p_i=q_i=0\): the former achieves vanishing mean MSE and converges to 0, whereas the latter remains biased and fails to converge, showing that modeling flips is essential for consistency.

\begin{figure}[t]
  \centering
  \includegraphics[width=0.8\linewidth]{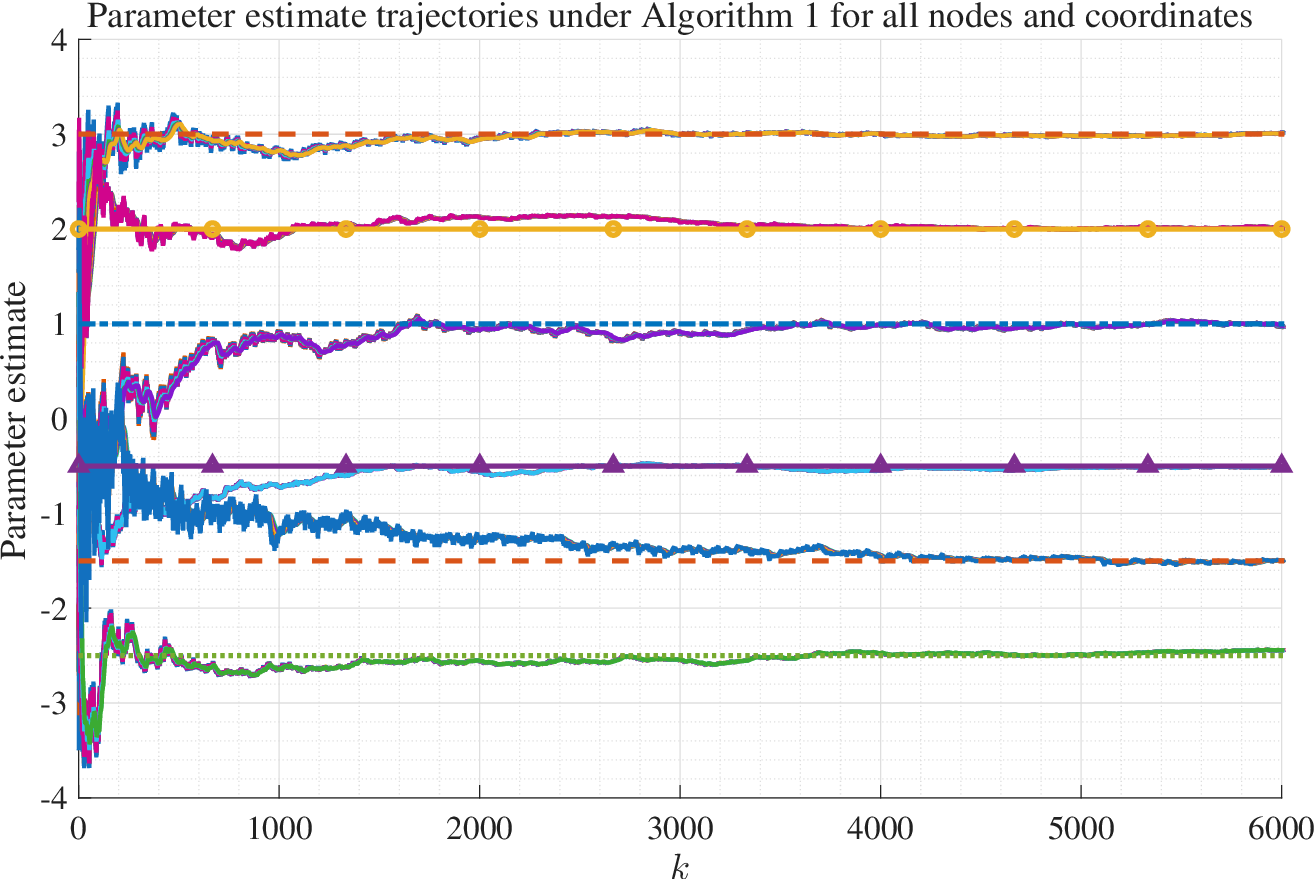}
  \caption{Parameter estimate trajectories under Algorithm~\ref{algo1} for all nodes and all coordinates.}
  \label{fig:thetaTraj}
\end{figure}
\begin{figure}[t]
  \centering
  \includegraphics[width=0.8\linewidth]{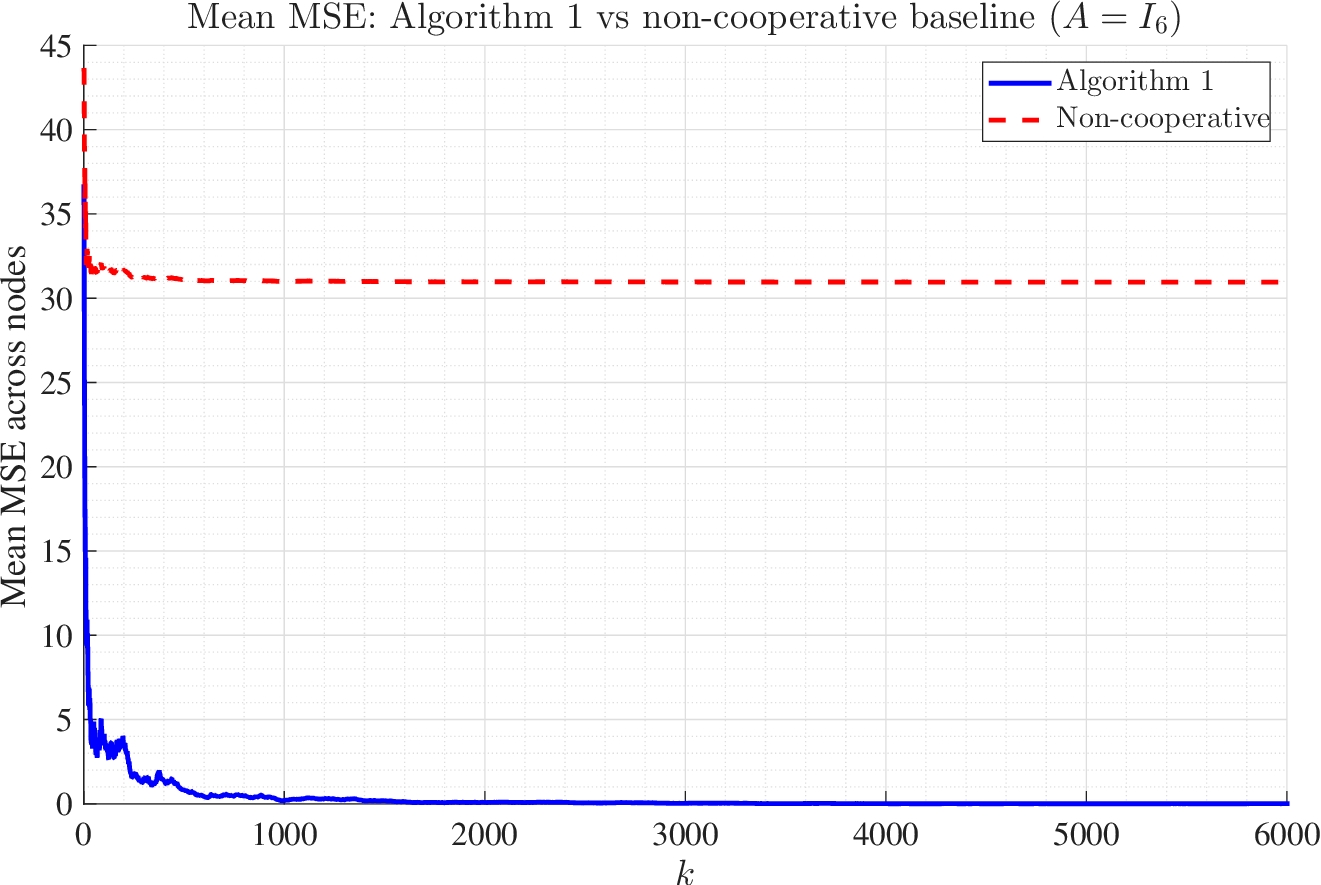}
  \caption{Mean MSE across nodes for Algorithm~\ref{algo1} versus the non-cooperative baseline \(A=I_n\).}
  \label{fig:meanMSE_vs_I}
\end{figure}

\begin{figure}[t]
  \centering
  \includegraphics[width=0.8\linewidth]{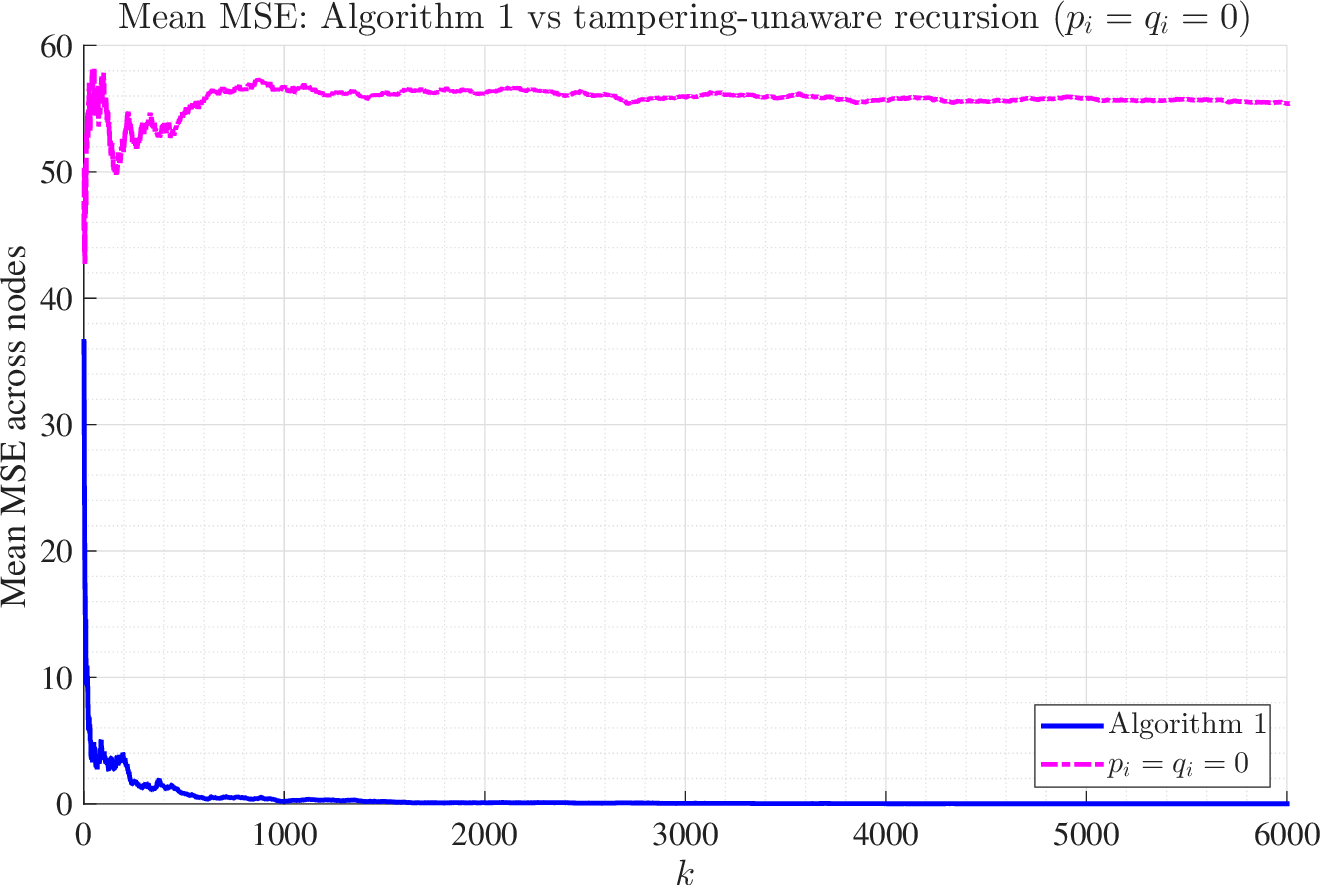}
  \caption{Mean MSE across nodes for Algorithm~\ref{algo1} versus a tampering-unaware recursion that sets \(p_i=q_i=0\).}
  \label{fig:meanMSE_unaware}
\end{figure}

\section{Conclusion}

We developed a convergence theory for a class of distributed recursive projection algorithms operating with binary observations subject to measurement-side tampering. Under binary observations and flip attacks, we proved that the accumulated regret of the adaptive predictor grows logarithmically. Moreover, under a cooperative excitation condition, we established almost-sure convergence of all nodewise estimates to the true parameter. Our analysis does not require independence, stationarity, or Gaussian assumptions on the regressors or noises. The cooperative excitation condition further shows that the proposed distributed algorithm can complete the estimation task collectively even when no single sensor is sufficiently exciting on its own, thereby extending the weakest known excitation requirements from classical least-squares to the binary–tampering distributed setting.

Future research directions include considering identification in which the flip probabilities \((p_i,q_i)\) are unknown or time-varying and estimated online together with \(\theta\). Another direction is to give conditions for how many attacked nodes can be tolerated, and cases where correct identification is impossible.

\bibliographystyle{siamplain}
\bibliography{ifacconf}

@book{ljung1995system,
  title={System identification toolbox: User's guide},
  author={Ljung, Lennart},
  year={1995},
  publisher={Citeseer}
}

@article{cattivelli2009diffusion,
  title={Diffusion LMS strategies for distributed estimation},
  author={Cattivelli, Federico S and Sayed, Ali H},
  journal={IEEE Transactions on Signal Processing},
  volume={58},
  number={3},
  pages={1035--1048},
  year={2009},
  publisher={IEEE}
}

@article{chen2012diffusion,
  title={Diffusion adaptation strategies for distributed optimization and learning over networks},
  author={Chen, Jianshu and Sayed, Ali H},
  journal={IEEE Transactions on Signal Processing},
  volume={60},
  number={8},
  pages={4289--4305},
  year={2012},
  publisher={IEEE}
}

@article{cattivelli2010diffusion,
  title={Diffusion strategies for distributed Kalman filtering and smoothing},
  author={Cattivelli, Federico S and Sayed, Ali H},
  journal={IEEE Transactions on Automatic Control},
  volume={55},
  number={9},
  pages={2069--2084},
  year={2010},
  publisher={IEEE}
}

@article{alghunaim2024local,
  title={Local exact-diffusion for decentralized optimization and learning},
  author={Alghunaim, Sulaiman A},
  journal={IEEE Transactions on Automatic Control},
  volume={69},
  number={11},
  pages={7371--7386},
  year={2024},
  publisher={IEEE}
}

@article{xie2018analysis,
  title={Analysis of distributed adaptive filters based on diffusion strategies over sensor networks},
  author={Xie, Siyu and Guo, Lei},
  journal={IEEE Transactions on Automatic Control},
  volume={63},
  number={11},
  pages={3643--3658},
  year={2018},
  publisher={IEEE}
}

@article{xie2020convergence,
  title={Convergence of a distributed least squares},
  author={Xie, Siyu and Zhang, Yaqi and Guo, Lei},
  journal={IEEE Transactions on Automatic Control},
  volume={66},
  number={10},
  pages={4952--4959},
  year={2020},
  publisher={IEEE}
}

@article{gan2023distributed,
  title={Distributed sparse identification for stochastic dynamic systems under cooperative non-persistent excitation condition},
  author={Gan, Die and Liu, Zhixin},
  journal={Automatica},
  volume={151},
  pages={110958},
  year={2023},
  publisher={Elsevier}
}

@article{kar2012distributed,
  title={Distributed parameter estimation in sensor networks: Nonlinear observation models and imperfect communication},
  author={Kar, Soummya and Moura, Jos{\'e} MF and Ramanan, Kavita},
  journal={IEEE Transactions on Information Theory},
  volume={58},
  number={6},
  pages={3575--3605},
  year={2012},
  publisher={IEEE}
}

@article{michelusi2022finite,
  title={Finite-bit quantization for distributed algorithms with linear convergence},
  author={Michelusi, Nicolo and Scutari, Gesualdo and Lee, Chang-Shen},
  journal={IEEE Transactions on Information Theory},
  volume={68},
  number={11},
  pages={7254--7280},
  year={2022},
  publisher={IEEE}
}

@article{zhu2018mitigating,
  title={Mitigating quantization effects on distributed sensor fusion: A least squares approach},
  author={Zhu, Shanying and Chen, Cailian and Xu, Jinming and Guan, Xinping and Xie, Lihua and Johansson, Karl Henrik},
  journal={IEEE Transactions on Signal Processing},
  volume={66},
  number={13},
  pages={3459--3474},
  year={2018},
  publisher={IEEE}
}

@article{iakovidou2022s,
  title={S-NEAR-DGD: A flexible distributed stochastic gradient method for inexact communication},
  author={Iakovidou, Charikleia and Wei, Ermin},
  journal={IEEE Transactions on Automatic Control},
  volume={68},
  number={2},
  pages={1281--1287},
  year={2022},
  publisher={IEEE}
}

@article{wang2021distributed,
  title={Distributed recursive projection identification with binary-valued observations},
  author={Wang, Ying and Zhao, Yanlong and Zhang, Ji-Feng},
  journal={Journal of Systems Science and Complexity},
  volume={34},
  number={5},
  pages={2048--2068},
  year={2021},
  publisher={Springer}
}

@article{lu2025consensus,
  title={Consensus of multi-agent systems under binary-valued measurements: An event-triggered coordination approach},
  author={Lu, Xiaodong and Wang, Ting and Zhao, Yanlong and Zhang, Ji-Feng},
  journal={Automatica},
  volume={176},
  pages={112255},
  year={2025},
  publisher={Elsevier}
}

@article{wang2026distributed,
  title={Distributed estimation with quantized measurements and communication over Markovian switching topologies},
  author={Wang, Ying and Guo, Jian and Zhao, Yanlong and Zhang, Ji-feng},
  journal={Automatica},
  volume={183},
  pages={112658},
  year={2026},
  publisher={Elsevier}
}

@inproceedings{yin2018byzantine,
  title={Byzantine-robust distributed learning: Towards optimal statistical rates},
  author={Yin, Dong and Chen, Yudong and Kannan, Ramchandran and Bartlett, Peter},
  booktitle={International Conference on Machine Learning},
  pages={5650--5659},
  year={2018},
  organization={Pmlr}
}

@article{pasqualetti2013attack,
  title={Attack detection and identification in cyber-physical systems},
  author={Pasqualetti, Fabio and D{\"o}rfler, Florian and Bullo, Francesco},
  journal={IEEE Transactions on Automatic Control},
  volume={58},
  number={11},
  pages={2715--2729},
  year={2013},
  publisher={IEEE}
}

@article{an2019distributed,
  title={Distributed secure state estimation for cyber--physical systems under sensor attacks},
  author={An, Liwei and Yang, Guang-Hong},
  journal={Automatica},
  volume={107},
  pages={526--538},
  year={2019},
  publisher={Elsevier}
}

@article{su2019finite,
  title={Finite-time guarantees for Byzantine-resilient distributed state estimation with noisy measurements},
  author={Su, Lili and Shahrampour, Shahin},
  journal={IEEE Transactions on Automatic Control},
  volume={65},
  number={9},
  pages={3758--3771},
  year={2019},
  publisher={IEEE}
}

@article{an2021byzantine,
  title={Byzantine-resilient distributed state estimation: A min-switching approach},
  author={An, Liwei and Yang, Guang-Hong},
  journal={Automatica},
  volume={129},
  pages={109664},
  year={2021},
  publisher={Elsevier}
}

@article{de2015input,
  title={Input-to-state stabilizing control under denial-of-service},
  author={De Persis, Claudio and Tesi, Pietro},
  journal={IEEE Transactions on Automatic Control},
  volume={60},
  number={11},
  pages={2930--2944},
  year={2015},
  publisher={IEEE}
}

@article{chen2018resilient,
  title={Resilient distributed estimation through adversary detection},
  author={Chen, Yuan and Kar, Soummya and Moura, Jose MF},
  journal={IEEE Transactions on Signal Processing},
  volume={66},
  number={9},
  pages={2455--2469},
  year={2018},
  publisher={IEEE}
}

@article{he2021secure,
  title={How to secure distributed filters under sensor attacks},
  author={He, Xingkang and Ren, Xiaoqiang and Sandberg, Henrik and Johansson, Karl Henrik},
  journal={IEEE Transactions on Automatic Control},
  volume={67},
  number={6},
  pages={2843--2856},
  year={2021},
  publisher={IEEE}
}

@inproceedings{fang2018robustifying,
  title={Robustifying the Kalman filter against measurement outliers: An innovation saturation mechanism},
  author={Fang, Huazhen and Haile, Mulugeta A and Wang, Yebin},
  booktitle={2018 IEEE Conference on Decision and Control (CDC)},
  pages={6390--6395},
  year={2018},
  organization={IEEE}
}

@article{fang2021robust,
  title={Robust extended Kalman filtering for systems with measurement outliers},
  author={Fang, Huazhen and Haile, Mulugeta A and Wang, Yebin},
  journal={IEEE Transactions on Control Systems Technology},
  volume={30},
  number={2},
  pages={795--802},
  year={2021},
  publisher={IEEE}
}

@article{yu2022robust,
  title={Robust resilient diffusion over multi-task networks against Byzantine attacks: Design, analysis and applications},
  author={Yu, Tao and de Lamare, Rodrigo C and Yu, Yi},
  journal={IEEE Transactions on Signal Processing},
  volume={70},
  pages={2826--2841},
  year={2022},
  publisher={IEEE}
}

@article{han2025masked,
  title={Masked Diffusion Strategy for Privacy-Preserving Distributed Learning},
  author={Han, Hongyu and Zhang, Sheng and Chen, Hongyang and Sayed, Ali H},
  journal={IEEE Transactions on Information Forensics and Security},
  year={2025},
  publisher={IEEE}
}

@article{wu2020zero,
  title={Zero-sum game-based optimal secure control under actuator attacks},
  author={Wu, Chengwei and Li, Xiaolei and Pan, Wei and Liu, Jianxing and Wu, Ligang},
  journal={IEEE Transactions on Automatic Control},
  volume={66},
  number={8},
  pages={3773--3780},
  year={2020},
  publisher={IEEE}
}

@article{paarporn2024strategically,
  title={Strategically revealing intentions in General Lotto games},
  author={Paarporn, Keith and Chandan, Rahul and Kovenock, Dan and Alizadeh, Mahnoosh and Marden, Jason R},
  journal={IEEE Transactions on Automatic Control},
  volume={69},
  number={8},
  pages={5396--5407},
  year={2024},
  publisher={IEEE}
}

@book{farrar2012structural,
  title={Structural health monitoring: a machine learning perspective},
  author={Farrar, Charles R and Worden, Keith},
  year={2012},
  publisher={John Wiley \& Sons}
}

@article{farrar2007introduction,
  title={An introduction to structural health monitoring},
  author={Farrar, Charles R and Worden, Keith},
  journal={Philosophical Transactions of the Royal Society A: Mathematical, Physical and Engineering Sciences},
  volume={365},
  number={1851},
  pages={303--315},
  year={2007},
  publisher={The Royal Society London}
}

@article{khraisat2019survey,
  title={Survey of intrusion detection systems: techniques, datasets and challenges},
  author={Khraisat, Ansam and Gondal, Iqbal and Vamplew, Peter and Kamruzzaman, Joarder},
  journal={Cybersecurity},
  volume={2},
  number={1},
  pages={1--22},
  year={2019},
  publisher={Springer}
}

@article{banerjee2008fault,
  title={Fault tolerant multiple event detection in a wireless sensor network},
  author={Banerjee, Torsha and Xie, Bin and Agrawal, Dharma P},
  journal={Journal of Parallel and Distributed Computing},
  volume={68},
  number={9},
  pages={1222--1234},
  year={2008},
  publisher={Elsevier}
}

@article{hager1989updating,
  title={Updating the inverse of a matrix},
  author={Hager, William W},
  journal={SIAM Review},
  volume={31},
  number={2},
  pages={221--239},
  year={1989},
  publisher={SIAM}
}

@book{chen2012identification,
  title={Identification and stochastic adaptive control},
  author={Chen, Han-Fu and Guo, Lei},
  year={2012},
  publisher={Springer Science \& Business Media}
}

@misc{guo2025recursivebinaryidentificationdata,
      title={Recursive Binary Identification under Data Tampering and Non-Persistent Excitation with Application to Emission Control}, 
      author={Jian Guo and Lihong Pei and Wenchao Xue and Yanlong Zhao and Ji-Feng Zhang},
      year={2025},
      eprint={2511.08629},
      archivePrefix={arXiv},
      primaryClass={eess.SY},
      url={https://arxiv.org/abs/2511.08629}, 
}

@article{guo2025state,
  title={State estimation with protecting exogenous inputs via Cram{\'e}r-Rao lower bound approach},
  author={Guo, Liping and Wang, Jimin and Zhao, Yanlong and Zhang, Ji-Feng},
  journal={arXiv preprint arXiv:2410.08756},
  year={2025}
}

@article{zhang2026dive,
  title={Dive into streaming: efficient identification of encrypted dynamic DASH video traffic},
  author={Zhang, Xiyuan and Xiong, Gang and Gou, Gaopeng and Li, Zhen and Gu, Zheyuan and Huang, Yiyang and Fang, Binxing},
  journal={Science China Information Sciences},
  volume={69},
  number={1},
  pages={1--21},
  year={2026},
  publisher={Springer}
}

@article{su2025event,
  title={Event-triggered leader-follower bipartite consensus control for nonlinear multi-agent systems under DoS attacks},
  author={Su, Wei and Mu, Chaoxu and Zhu, Song and Niu, Ben and Sun, Changyin},
  journal={Science China Information Sciences},
  volume={68},
  number={3},
  pages={132206},
  year={2025},
  publisher={Springer}
}

\end{document}